\newtheorem{definition}{Definition}[section]
\newtheorem{theorem}[definition]{Theorem}
\newtheorem{lemma}[definition]{Lemma}
\newtheorem{proposition}[definition]{Proposition}
\newtheorem{corollary}[definition]{Corollary}
\theoremstyle{definition}
\newtheorem{remark}[definition]{Remark}
\newtheorem{example}[definition]{Example}
\numberwithin{equation}{section}
\DeclareMathOperator{\diam}{diam}
\def\we{\mathrel{\stackrel{\rm w}=}}
\begin{document}

\begin{center}
{\Large\bf Weak similarities of metric and semimetric spaces}
\end{center}

\begin{center}
{\bf Oleksiy Dovgoshey and  Evgeniy Petrov}
\end{center}

\begin{abstract}
Let $(X,d_X)$ and $(Y,d_Y)$ be semimetric spaces with distance sets
$D(X)$ and, respectively, $D(Y)$. A mapping $F:X\to Y$ is a weak
similarity if it is surjective and there exists a strictly
increasing \mbox{$f:D(Y)\to D(X)$} such that $d_X=f\circ d_Y\circ
F$. It is shown that the weak similarities between geodesic spaces
are usual similarities and every weak similarity $F:X\to Y$ is an
isometry if $X$ and $Y$ are ultrametric and compact with
$D(X)=D(Y)$. Some conditions under which the weak similarities are
homeomorphisms or uniform equivalences are also found.
\end{abstract}

\bigskip
{\bf Key words:} isometry, similarity, weak similarity, ultrametric,
geodesic, semimetric, rigidity of distance set.
\bigskip

{\bf 2010 AMS Classification:} 54E40, 54E35, 54E25.

\section{Introduction}

In the paper we define the  notion of weak similarities of
semimetric spaces and study some properties of these mappings.
Before doing this work we remaind some definitions and introduce
related designations.

Let $X$ be a set. A \emph{semimetric} on $X$ is a function
$d:X\times X\to \mathbb{R}^+$, $\mathbb{R}^+=[0,\infty)$, such that
$d(x,y)=d(y,x)$ and $(d(x,y)=0)\Leftrightarrow (x=y)$ for all $x,y
\in X$. A pair $(X,d)$, where  $d$  is a semimetric on $X$, is
called a \emph{semimetric space} (see, for example, \cite[p.
7]{Bl}). A semimetric $d$ is a \emph{metric} if, in addition, the
\emph{triangle inequality} $d(x,y)\leqslant d(x,z)+d(z,y)$ holds for
all $x, y, z \in X$. A metric is an \emph{ultrametric} if we have
the \emph{ultrametric inequality} $d(x,y)\leqslant \max
\{d(x,z),d(z,y)\}$ instead of the triangle one. We shall denote by
$\textbf{SM}$, $\textbf{M}$ and $\textbf{UM}$  the classes of the
nonvoid semimetric spaces, the nonvoid metric spaces and,
respectively, the nonvoid ultrametric ones.

Let $(X,d_X)$ and $(Y,d_Y)$ be semimetric spaces. A mapping $\Phi:
X\to Y$ is a \emph{similarity} if $\Phi$ is bijective and there is a
positive number $r=r(\Phi)$, the \emph{ratio} of $\Phi$, such that
$$
d_Y(\Phi(x),\Phi(y))=rd_X(x,y)
$$
for all $x,y\in X$ (cf. \cite[p. 45]{Ed}). The \emph{isometries} are
similarities with the ratio $r=1$. The semimetric spaces $X$ and $Y$
are said to be \emph{isometric} if there exists an isometry $F:X\to
Y$.  We define the \emph{distance set} $D(X)$ of a nonvoid
semimetric space $(X,d)$ as
$$
D(X):=\{d(x,y): x,y \in X\}.
$$
The following concept  seems  to be  a natural generalization of
similarities of semimetric spaces.
\begin{definition}\label{def1.1}
Let  $(X,d_X)$, $(Y,d_Y)\in \textbf{SM}$. A surjective mapping
$\Phi:X\to Y$ is a weak similarity if there is a  strictly
increasing function $f:D(Y)\to D(X)$ such that the equality
\begin{equation}\label{eq1.4}
d_X(x,y)=(f\circ d_Y)(\Phi(x),\Phi(y))
\end{equation}
holds for all $x,y\in X$. Where $f\circ d_Y$ denotes the composition
of the functions $f$ and $d_Y$. Here a function $f$ is said to be a
\emph{scaling function} of $\Phi$.
\end{definition}

If $\Phi:X\to Y$ is a weak similarity, we write $X \we Y$, say that
$X$ and  $Y$  are \emph{weak equivalent} and that the pair
$(f,\Phi)$ is a \emph{realization} of $X\we Y$.

It is clear that every similarity is a weak similarity. Moreover a
weak similarity $\Phi:X\to Y$ with a scaling function $f:D(Y)\to
D(X)$ is a similarity with a ratio $r$ if and only if
\begin{equation}\label{eq1.10}
f(t)=\frac{1}{r}t
\end{equation}
for every $t\in D(Y)$ (see Lemma~\ref{lem3.2} below). It was shown
in ~\cite[Theorem 3.6]{DM} that if $f:\mathbb{R}^+\to\mathbb{R}^+$
is a bijection such that $f\circ d$ and $f^{-1}\circ d$ are metrics
for \textbf{every} metric $d$, then ~(\ref{eq1.10}) holds with some
$r>0$ for every $t\in \mathbb{R}^+$. In the present paper we have
found some conditions under which a weak similarity $\Phi:X\to Y$ is
a similarity or even an isometry for \textbf{given} $X$ and $Y$.

More precisely:
\begin{itemize}
  \item  Theorem~\ref{th3.1} shows that every weak similarity
$\Phi:X\to Y$ is an isometry if $X$ and $Y$ are ultrametric and
compact with $D(X)=D(Y)$;
  \item Corollary ~\ref{c9} of Theorem~\ref{c8} describes the general
structural conditions for the set $D(X)$ under which all weak
similarities $\Phi:X\to X$ are isometries;
  \item  In Theorem~\ref{th4.4} we
prove that a weak similarity $\Phi:X\to Y$ is a similarity if its
scaling function $f$ and the inverse $f^{-1}$ are subadditive in
some generalized sense;
  \item  Using Theorem~\ref{th4.4} we show that every
weak similarity $\Phi:X\to Y$ is a similarity if $X$ and $Y$ are
geodesic spaces (see Theorem~\ref{th4.7}).
\end{itemize}

Moreover, in Section 1 we study some common properties of weak
similarities and, in Section 2, find conditions under which weak
similarities are homeomorphisms  (see Proposition~\ref{prop2.5}) or
uniform equivalences (see Proposition~\ref{prop2.8*}).
\begin{proposition}\label{prop2.1}
The relation $\we$ is an equivalence on the class $\textbf{SM}$.
\end{proposition}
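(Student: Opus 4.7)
The plan is to verify reflexivity, symmetry and transitivity of $\we$ in turn. Reflexivity is essentially immediate: for any $(X,d_X)\in\textbf{SM}$, the identity map $\mathrm{id}_X:X\to X$ is surjective, and the identity function on $D(X)$ is strictly increasing and serves as a scaling function, so $(\mathrm{id}_{D(X)},\mathrm{id}_X)$ realizes $X\we X$. Transitivity is also routine: if $(f,\Phi)$ realizes $X\we Y$ and $(g,\Psi)$ realizes $Y\we Z$, then $\Psi\circ\Phi:X\to Z$ is surjective as a composition of surjections, $f\circ g:D(Z)\to D(X)$ is strictly increasing as a composition of strictly increasing maps, and substituting (\ref{eq1.4}) twice gives $d_X(x,y)=f(g(d_Z(\Psi(\Phi(x)),\Psi(\Phi(y)))))$, so $(f\circ g,\Psi\circ\Phi)$ realizes $X\we Z$.

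The main obstacle is symmetry, because the definition only requires $\Phi$ to be surjective, so to reverse the realization one must first argue that every weak similarity is in fact a bijection and that $f$ is in fact a bijection onto $D(X)$. For injectivity of $\Phi$, I would first observe that $0\in D(Y)$ (since $Y$ is nonvoid, take any $y\in Y$ and consider $d_Y(y,y)$), and then use (\ref{eq1.4}) with $x=y$ to deduce $f(0)=0$. Now if $\Phi(x)=\Phi(y)$, then $d_Y(\Phi(x),\Phi(y))=0$, so $d_X(x,y)=f(0)=0$, hence $x=y$. For surjectivity of $f$ onto $D(X)$: any $t\in D(X)$ has the form $d_X(x,y)=f(d_Y(\Phi(x),\Phi(y)))$, so $t$ lies in the image of $f$; combined with strict monotonicity, $f:D(Y)\to D(X)$ is bijective and its inverse $f^{-1}:D(X)\to D(Y)$ is strictly increasing.

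Having established this, symmetry follows by applying $f^{-1}$ to both sides of (\ref{eq1.4}) and expressing everything in terms of $\Phi^{-1}:Y\to X$: for $u,v\in Y$, writing $u=\Phi(x)$, $v=\Phi(y)$ gives
\[
d_Y(u,v)=f^{-1}(d_X(\Phi^{-1}(u),\Phi^{-1}(v))),
\]
so $(f^{-1},\Phi^{-1})$ realizes $Y\we X$. I expect the proof to be short; the only subtlety worth emphasizing in the write-up is the passage from surjectivity to bijectivity via $f(0)=0$, since without this observation the relation would not obviously be symmetric.
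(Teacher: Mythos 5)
Your proof is correct and follows essentially the same route as the paper: reflexivity and transitivity via identities and composition, and symmetry by first deducing $f(0)=0$ from equality (\ref{eq1.4}), then injectivity of $\Phi$ and surjectivity of $f$, and finally inverting the realization to $(f^{-1},\Phi^{-1})$. No gaps.
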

\begin{proof}
We must show that $\we$ is reflexive, symmetric and transitive.

\noindent \textbf{Reflexivity.} To prove the reflexivity it suffices
to take $X=Y$, $f(t)\equiv t$ and $\Phi(x)\equiv x$ in
~(\ref{eq1.4}).

\noindent \textbf{Symmetry.} Let $(X,d_X)$, $(Y,d_Y)\in \textbf{SM}$
and  let $X\we Y$ hold with a realization $(f,\Phi)$.
 Equality~(\ref{eq1.4}) implies the
inequality $(f\circ d_Y)(\Phi(x_1),\Phi(x_2))>0$ for every pair of
distinct $x_1, x_2 \in X$. Consequently $d_Y(\Phi(x_1),\Phi(x_2))>0$
because $f$ is strictly increasing and $f(0)=0$ (the last equality
is also follows from~(\ref{eq1.4})). Thus we have
\begin{equation}\label{eq1.7}
(x_1\neq x_2)\Rightarrow (\Phi(x_1)\neq \Phi(x_2)).
\end{equation}
The surjectivity of $\Phi$ and ~(\ref{eq1.7}) imply the existence of
the inverse mapping \mbox{$\Phi^{-1}:Y\to X$.} Note also that
~(\ref{eq1.4}) holds for all $x, y \in X$ if and only if
\begin{equation}\label{eq1.7*}
d_X=f\circ d_Y\circ (\Phi\otimes \Phi)
\end{equation}
where $\Phi \otimes\Phi$ is a mapping from $X\times X$ to $Y \times
Y$ satisfying $\Phi\otimes\Phi(x_1,x_2)=(\Phi(x_1),\Phi(x_2))$ for
every $(x_1,x_2)\in X\times X$. Since the left-hand side of
~(\ref{eq1.7*}) is surjective, the function $f$ is also surjective.
Consequently $f$ is bijective, so that there is the inverse function
\mbox{$f^{-1}:D(X)\to D(Y)$}. Note also that $\Phi^{-1}$ is
surjective and $f^{-1}$ is strictly increasing.
Rewriting~(\ref{eq1.7*}) in the form
$$
f^{-1}\circ d_X\circ (\Phi^{-1}\otimes\Phi^{-1})=d_Y
$$
we see that the relation $Y \we X$ holds with the realization
$(f^{-1},\Phi^{-1})$. Thus $\we$ is symmetric.

\noindent \textbf{Transitivity.} Suppose we have $X\we Y$ and $Y\we
Z$ with the corresponding realizations $(f,\Phi)$ and $(g,\Psi)$.
Since  $\Psi\circ\Phi$ is surjective and $f\circ g$ is strictly
increasing,  $X\we Z$ follows from the commutativity of the diagram
\begin{equation*}
\begin{diagram}
\node{X\times X} \arrow[2]{e,t}{\Phi\otimes \Phi} \arrow{s,l}{d_X}
\node[2]{Y\times Y} \arrow[2]{e,t}{\Psi\otimes \Psi} \arrow{s,l}{d_Y}
\node[2]{Z\times Z} \arrow{s,l}{d_Z}\\
\node{D(X)}\node[2]{D(Y)} \arrow[2]{w,t}{f}\node[2]{D(Z)}
\arrow[2]{w,t}{g}
\end{diagram}
\end{equation*}
where $\Psi\otimes\Psi(y_1,y_2)=(\Psi(y_1),\Psi(y_2))$ for all $y_1,
y_2\in Y$.
\end{proof}

\begin{corollary}\label{cor2.1}
If $\Phi:X\to Y$ is weak similarity with the scaling function
$f:D(Y)\to D(X)$ and, $\Psi:Y\to Z$ is a weak similarity with the
scaling function $g:D(Z)\to D(Y)$, then $\Psi\circ \Phi$ is a weak
similarity with the scaling function $f\circ g$.
\end{corollary}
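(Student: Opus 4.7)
The plan is to observe that this corollary is essentially a re-packaging of the transitivity argument already carried out in Proposition~\ref{prop2.1}, so the proof will be a direct verification rather than require a new idea.

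First I would check surjectivity: since $\Phi:X\to Y$ and $\Psi:Y\to Z$ are both surjective by hypothesis, the composition $\Psi\circ\Phi:X\to Z$ is surjective. Next, I would check that the candidate scaling function $f\circ g:D(Z)\to D(X)$ is strictly increasing, which is immediate from the fact that a composition of two strictly increasing real functions is strictly increasing; here $g:D(Z)\to D(Y)$ and $f:D(Y)\to D(X)$ are strictly increasing by assumption, and the domains and codomains match so that $f\circ g$ is well-defined as a map $D(Z)\to D(X)$.

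The last step is to verify the defining equality~(\ref{eq1.4}) for $(f\circ g, \Psi\circ\Phi)$. Using the hypotheses $d_X = f\circ d_Y \circ (\Phi\otimes\Phi)$ and $d_Y = g\circ d_Z \circ (\Psi\otimes\Psi)$ in the form~(\ref{eq1.7*}), together with the obvious identity $(\Psi\circ\Phi)\otimes(\Psi\circ\Phi) = (\Psi\otimes\Psi)\circ(\Phi\otimes\Phi)$, one computes
\[
d_X = f\circ d_Y\circ(\Phi\otimes\Phi) = f\circ g\circ d_Z\circ(\Psi\otimes\Psi)\circ(\Phi\otimes\Phi) = (f\circ g)\circ d_Z\circ\bigl((\Psi\circ\Phi)\otimes(\Psi\circ\Phi)\bigr),
\]
which is exactly the statement that $\Psi\circ\Phi$ is a weak similarity with scaling function $f\circ g$. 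This is precisely what the commutative diagram at the end of the proof of Proposition~\ref{prop2.1} encodes.

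There is no genuine obstacle here; the only point worth flagging is the direction of the composition, since scaling functions go from the codomain distance set to the domain distance set, so the order $f\circ g$ (rather than $g\circ f$) is the correct one and matches the order in which the diagram is traversed.
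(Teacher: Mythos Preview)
Your proposal is correct and follows exactly the paper's approach: the corollary is stated without separate proof because it is a direct restatement of the transitivity argument in Proposition~\ref{prop2.1}, and your verification of surjectivity, strict monotonicity of $f\circ g$, and the chain of equalities is precisely what the commutative diagram there encodes. Your remark about the order of composition of the scaling functions is apt and matches the paper's convention.
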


The proof of Proposition~\ref{prop2.1} gives also the next
\begin{corollary}\label{cor2.2*}
If $(f,\Phi)$ is a realization of the equivalence $X\we Y$, then $f$
and $\Phi$ are bijective.
\end{corollary}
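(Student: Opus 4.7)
The plan is to harvest the two bijectivity facts directly from the symmetry part of the proof of Proposition~\ref{prop2.1}: both arguments are already present there, and the corollary only needs them to be restated for an arbitrary realization $(f,\Phi)$ of an arbitrary weak equivalence $X\we Y$.

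For $\Phi$, surjectivity is built into Definition~\ref{def1.1}, so only injectivity must be checked. Setting $x=y$ in~(\ref{eq1.4}) gives $0 = d_X(x,x) = f(d_Y(\Phi(x),\Phi(x)))$, and since $f$ is strictly increasing with $0\in D(Y)$, this forces $f(0)=0$. Now given distinct $x_1,x_2\in X$, the value $d_X(x_1,x_2)$ is strictly positive, hence so is $f(d_Y(\Phi(x_1),\Phi(x_2)))$; strict monotonicity combined with $f(0)=0$ then yields $d_Y(\Phi(x_1),\Phi(x_2))>0$, i.e. $\Phi(x_1)\neq\Phi(x_2)$. This is exactly implication~(\ref{eq1.7}).

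For $f:D(Y)\to D(X)$, injectivity is automatic from strict monotonicity. For surjectivity I would rewrite~(\ref{eq1.4}) as $d_X=f\circ d_Y\circ(\Phi\otimes\Phi)$ and observe that, by the surjectivity of $\Phi$, one has $(\Phi\otimes\Phi)(X\times X)=Y\times Y$, so the image of $d_Y\circ(\Phi\otimes\Phi)$ equals the image of $d_Y$, which is $D(Y)$. Consequently the image of $f$ restricted to its domain $D(Y)$ equals the image of $d_X$, which is $D(X)$. Therefore $f$ is surjective, and being also strictly increasing, is bijective. There is no genuine obstacle here; the only point worth flagging is that the surjectivity of $f$ relies on the surjectivity of $\Phi$, without which the image of $d_Y\circ(\Phi\otimes\Phi)$ could miss elements of $D(Y)$ and $f$ would not be forced to cover all of $D(X)$.
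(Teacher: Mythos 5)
Your proof is correct and follows essentially the same route as the paper: the paper derives this corollary directly from the symmetry part of the proof of Proposition~\ref{prop2.1}, where injectivity of $\Phi$ is obtained from $f(0)=0$ and the strict increase of $f$, and surjectivity of $f$ from the surjectivity of the left-hand side of~(\ref{eq1.7*}). The only nitpick is that $f(0)=0$ follows immediately from setting $x=y$ in~(\ref{eq1.4}) (since $d_Y(\Phi(x),\Phi(x))=0$), with no appeal to monotonicity needed at that step.
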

The following is closely related to Proposition 2.2 in~\cite{BDHM}.
\begin{proposition}\label{prop2.3}
Let $X\in \textbf{UM}$. Then the relation $X\we Y$ implies the
membership $Y\in \textbf{UM}$ for every $Y\in \textbf{SM}$.
\end{proposition}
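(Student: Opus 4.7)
The plan is to transport the ultrametric inequality from $X$ to $Y$ along the bijection supplied by the realization, exploiting that a strictly increasing function both commutes with $\max$ and is injective.

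First I would invoke Corollary~\ref{cor2.2*} to fix a realization $(f,\Phi)$ of $X\we Y$ in which $\Phi:X\to Y$ and $f:D(Y)\to D(X)$ are both bijective. Given arbitrary $u,v,w\in Y$, I would set $x:=\Phi^{-1}(u)$, $y:=\Phi^{-1}(v)$, $z:=\Phi^{-1}(w)$, and apply the ultrametric inequality for $d_X$ in combination with~(\ref{eq1.4}) to obtain
\begin{equation*}
f\bigl(d_Y(u,v)\bigr)=d_X(x,y)\leqslant \max\bigl\{d_X(x,z),d_X(z,y)\bigr\}=\max\bigl\{f(d_Y(u,w)),f(d_Y(w,v))\bigr\}.
\end{equation*}

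Next I would use that $f$ is strictly increasing to move the maximum outside: for any $a,b\in D(Y)$, strict monotonicity gives $\max\{f(a),f(b)\}=f(\max\{a,b\})$. Hence the previous display becomes $f(d_Y(u,v))\leqslant f(\max\{d_Y(u,w),d_Y(w,v)\})$. Strict monotonicity also yields injectivity of $f$ in the contrapositive form $f(a)\leqslant f(b)\Rightarrow a\leqslant b$, so cancelling $f$ delivers the ultrametric inequality
\begin{equation*}
d_Y(u,v)\leqslant \max\bigl\{d_Y(u,w),d_Y(w,v)\bigr\}.
\end{equation*}

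Finally I would note that the ultrametric inequality implies the triangle inequality, so $(Y,d_Y)$ is in fact a metric space, and therefore $Y\in\textbf{UM}$. There is no genuine obstacle here; the only point that deserves attention is that strict monotonicity of $f$ is used twice — once to exchange $f$ with $\max$, and once to cancel $f$ — and that both uses are legitimate because $D(Y)$ is a totally ordered subset of $\mathbb{R}^+$ on which $f$ is defined.
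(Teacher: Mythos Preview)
Your argument is correct: pulling back along the bijection $\Phi^{-1}$, applying the ultrametric inequality in $X$, commuting $f$ with $\max$, and then cancelling $f$ by strict monotonicity is exactly the natural route, and the two uses of monotonicity you flag are both legitimate since $\max\{a,b\}\in\{a,b\}\subseteq D(Y)$. The paper itself omits the proof entirely, leaving it as an exercise, so there is no alternative approach to compare against; your write-up is essentially what the authors had in mind.
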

We leave this proposition without any proof as an exercise to the
reader.
\begin{remark}\label{rem2.4}
Simple examples show that, in general, the membership
$X\in\textbf{M}$ and the relation $X\we Y$ do not imply $Y\in
\textbf{M}$. It can be shown that for an increasing function
$f:\mathbb{R}^+\to\mathbb{R}^+$ the function $f\circ d$ is a metric
for every $(X,d)\in \textbf{M}$ if and only if $f$ is subadditive,
$f(0)=0$, and $f(x)>0$ for every $x>0$ (see Theorem 4.1
in~\cite{DM1}).
\end{remark}

Let $d$ and $\rho$ be two semimetrics defined on the same set $X$.
Then $d$ and $\rho$ are said to be \emph{coincreasing} if the
equivalence
$$
(d(x,y)\leqslant d(z,w))\Leftrightarrow (\rho(x,y)\leqslant
\rho(z,w))
$$
holds for all $x, y, z, w \in X$ (cf. Definition 3.1 from
\cite{DM}). The following proposition is an analogy to Lemma 3.1
from ~\cite{DPK}.
\begin{proposition}\label{prop1.10}
Let $(X,d_X)$, $(Y,d_Y)\in \textbf{SM}$ and let
$X\xrightarrow{\Phi}Y$ be a bijection. The mapping
$(X,d_X)\xrightarrow{\Phi}(Y,d_Y)$ is a weak similarity if and only
if there is a semimetric $\rho_X$ on $X$ such that $\rho_X$ and
$d_X$ are coincreasing and $(X,\rho_X)\xrightarrow{\Phi}(Y,d_Y)$ is
an isometry.
\end{proposition}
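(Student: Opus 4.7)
The plan is to prove both implications by transferring the semimetric $d_Y$ back through $\Phi$ to get a candidate semimetric $\rho_X$ on $X$, and then translating between the pairwise inequalities on $(X,\rho_X)$ and the distance values in $D(Y)$.

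For the ``only if'' direction, suppose $\Phi:X\to Y$ is a weak similarity with realization $(f,\Phi)$. I would set
\[
\rho_X(x,y):=d_Y(\Phi(x),\Phi(y)).
\]
Since $\Phi$ is a bijection (by Corollary~\ref{cor2.2*}) and $d_Y\in\mathbf{SM}$, the pullback $\rho_X$ is automatically a semimetric on $X$, and $\Phi:(X,\rho_X)\to(Y,d_Y)$ is by construction an isometry. The coincreasing property is then immediate from the identity $d_X=f\circ\rho_X$ together with the strict monotonicity of $f$: a strictly increasing function is both order-preserving and order-reflecting, so
\[
d_X(x,y)\leqslant d_X(z,w)\iff f(\rho_X(x,y))\leqslant f(\rho_X(z,w))\iff\rho_X(x,y)\leqslant\rho_X(z,w).
\]

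For the ``if'' direction, suppose $\rho_X$ is a coincreasing partner of $d_X$ and that $\Phi:(X,\rho_X)\to(Y,d_Y)$ is an isometry. I would define $f:D(Y)\to D(X)$ by picking, for each $t\in D(Y)$, a representation $t=d_Y(\Phi(x),\Phi(y))=\rho_X(x,y)$ and setting $f(t):=d_X(x,y)$. Well-definedness is the point where the hypothesis bites: if $\rho_X(x,y)=\rho_X(z,w)$, then applying the coincreasing equivalence to both $\leqslant$ and $\geqslant$ forces $d_X(x,y)=d_X(z,w)$. Strict monotonicity of $f$ comes from the same two-way application: $\rho_X(x,y)<\rho_X(z,w)$ gives $d_X(x,y)\leqslant d_X(z,w)$ from the $\leqslant$ direction, and equality would pull back to $\rho_X(x,y)=\rho_X(z,w)$, a contradiction. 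Equation~(\ref{eq1.4}) and surjectivity of $\Phi$ then hold by construction.

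The main obstacle is the well-definedness and strict monotonicity of $f$ in the reverse direction. The coincreasing hypothesis is precisely the tool that allows a function of pairs (the values of $d_X$) to descend to a function of distance values (the map $f$ on $D(Y)$); without it, two pairs $(x,y)$ and $(z,w)$ with equal $\rho_X$-distance could receive different $d_X$-distances and the definition of $f$ would collapse. The forward implication, by contrast, reduces to routine bookkeeping once $\rho_X$ is identified as the pullback of $d_Y$.
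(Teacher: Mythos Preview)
Your argument is correct. The paper itself omits the proof of this proposition (``The simple proof is omitted here''), so there is no authorial argument to compare against; your write-up supplies exactly the natural two-direction verification the authors had in mind, with the pullback $\rho_X:=d_Y\circ(\Phi\otimes\Phi)$ for the forward implication and the quotient map $f$ on distance values for the converse. One cosmetic remark: in the ``only if'' direction you cite Corollary~\ref{cor2.2*} to get bijectivity of $\Phi$, but bijectivity is already part of the hypothesis of the proposition, so that citation is unnecessary (though harmless).
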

The simple proof is omitted here. Proposition~\ref{prop1.10} shows,
in particular, that the weak similarities are closely connected with
the isotone degenerate metric products. See~\cite{DPK} for the exact
definitions and some results in this direction.

\section{Weak equivalence, homeomorphism and
\newline
uniform equivalence}

Now we present conditions under which weak equivalent metric spaces
are homeomorphic. For every $A\subseteq \mathbb{R}^+$ we shall
denote by $\mathrm{ac}A$ the set of all accumulation points of $A$
in the space $\mathbb{R}^+$ with the standard topology.
\begin{proposition}\label{prop2.5}
Let $(X,d_X)$ and $(Y,d_Y)$ belong to $\textbf{M}$ and let $X\we Y$.
Suppose that the equivalence
\begin{equation}\label{eq2.0}
(0\in \mathrm{ac}D(X))\Leftrightarrow (0\in \mathrm{ac}D(Y))
\end{equation}
holds. Then $X$ and $Y$ are homeomorphic.
\end{proposition}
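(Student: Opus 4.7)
The plan is to show that the bijection $\Phi : X \to Y$ (which is bijective by Corollary~\ref{cor2.2*}) together with its inverse are both continuous, using the fact that the scaling function $f : D(Y) \to D(X)$ and $f^{-1} : D(X) \to D(Y)$ are strictly increasing bijections sending $0$ to $0$. I would split the argument into the two cases allowed by~(\ref{eq2.0}).

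\textbf{Case A:} $0 \notin \mathrm{ac}D(X)$ and $0 \notin \mathrm{ac}D(Y)$. In this case there is $\delta>0$ with $D(X)\cap(0,\delta)=\varnothing$, so every singleton in $X$ is open, i.e.\ $X$ carries the discrete topology. The same conclusion applies to $Y$, and any bijection between discrete spaces is a homeomorphism.

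\textbf{Case B:} $0 \in \mathrm{ac}D(X)$ and $0 \in \mathrm{ac}D(Y)$. Here I would prove the following key one-sided continuity lemma: under the assumptions of Case~B, $\lim_{t\to 0^{+},\,t\in D(Y)} f(t)=0$ and $\lim_{s\to 0^{+},\,s\in D(X)} f^{-1}(s)=0$. Indeed, since $f$ is strictly increasing with $f(0)=0$, the right limit of $f$ at $0$ equals $L:=\inf\{f(t):t\in D(Y),\,t>0\}$. If $L>0$, then the bijectivity of $f$ together with $f(0)=0$ would force $D(X)\subseteq\{0\}\cup[L,\infty)$, contradicting $0\in\mathrm{ac}D(X)$; hence $L=0$. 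The argument for $f^{-1}$ is symmetric, using $0\in\mathrm{ac}D(Y)$.

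From these two facts continuity of $\Phi$ and $\Phi^{-1}$ follows at once from~(\ref{eq1.4}): if $x_n\to x$ in $X$, then $f(d_Y(\Phi(x_n),\Phi(x)))=d_X(x_n,x)\to 0$, so $s_n:=d_X(x_n,x)\in D(X)$ tends to $0$, whence $d_Y(\Phi(x_n),\Phi(x))=f^{-1}(s_n)\to 0$ by continuity of $f^{-1}$ at $0$; the continuity of $\Phi^{-1}$ uses continuity of $f$ at $0$ in the same way. The only genuine subtlety is the lemma in Case~B; once it is in place the rest is a two-line verification. It is worth noting that hypothesis~(\ref{eq2.0}) is used in both directions precisely to rule out the mixed situation where, say, $0\in\mathrm{ac}D(X)$ but $0\notin\mathrm{ac}D(Y)$, in which $f$ or $f^{-1}$ would necessarily have a jump at $0$ and continuity of one of the two mappings could fail.
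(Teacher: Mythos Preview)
Your proposal is correct and follows essentially the same route as the paper: both split into the two cases dictated by~(\ref{eq2.0}), dispose of the discrete case trivially, and in the accumulation case prove that $f$ and $f^{-1}$ are continuous at $0$ (you argue by contradiction via the infimum $L$, the paper argues directly by picking $p\in(0,\varepsilon)\cap D(Y)$, but these are the same observation) and then deduce continuity of $\Phi$ and $\Phi^{-1}$ from~(\ref{eq1.4}).
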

\begin{proof}
Assume that  $0$ is an isolated point for both $D(X)$ and $D(Y)$.
Then $X$ and $Y$ are discrete as topological spaces. Let $(f,\Phi)$
be a realization of $X\we Y$. By Corollary~\ref{cor2.2*} the mapping
\mbox{$\Phi:X\to Y$} is a bijection. Every bijection between
discrete topological spaces is a homeomorphism. Thus $X$ and $Y$ are
homeomorphic in the case under consideration.

Consider now the case when $0\in \mathrm{ac}D(X)\cap
\mathrm{ac}D(Y)$. To prove that $X$ and $Y$ are homeomorphic it
suffices to show that the weak similarities $\Phi$ and $\Phi^{-1}$
are continuous. By definition $\Phi$ is continuous if the equality
\begin{equation}\label{eq2.1}
\lim\limits_{n\to\infty}d_Y(\Phi(x_0),\Phi(x_n))=0
\end{equation}
holds for every $x_0\in X$ and every sequence $\{x_n\}_{n\in
\mathbb{N}}$, $x_n\in X$, with $\lim\limits_{n\to
\infty}d_X(x_0,x_n)=0$. Equality~(\ref{eq2.1}) can be written in the
form
$$
\lim\limits_{n\to\infty}f^{-1}(d_X(x_0,x_n))=0.
$$
Hence to prove~(\ref{eq2.1}) it is sufficient  to show that the
scaling function \mbox{$f^{-1}:D(X)\to D(Y)$} is continuous at the
point $0$. The last is easy to see. Indeed, since $0\in
\mathrm{ac}D(Y)$, for every $\varepsilon>0$ there is a point
$p\in(0,\varepsilon)\cap D(Y)$. Since $f^{-1}$ is a bijection we can
find $r\in (0,\infty)\cap D(X)$ such that $f^{-1}(r)=p$. The
increase of $f^{-1}$ implies the inclusion $f^{-1}([0,r))\subseteq
[0,\varepsilon)$. Hence $f^{-1}$ is continuous at $0$. The
continuity of $\Phi$ follows. Similarly we can show that $\Phi^{-1}$
is continuous.
\end{proof}

It was shown in the previous proof that the scaling function
$f^{-1}$ is continuous at $0$ if $0 \in \mathrm{ac} D(Y)$. Since a
function defined on a subset of $\mathbb R$ is continuous if and
only if it is right and left continuous, we can obtain
\begin{proposition}\label{prop2.8*}
Let $(X,d_X)$ and $(Y,d_Y)$ belong to $\textbf{SM}$, let $f:D(Y)\to
D(X)$ be an increasing bijection and let $t\in D(Y)$. Then $f$
  is continuous at $t$ and  $f^{-1}$ is continuous at
  $f(t)$ if and only if
   $$(t\in \mathrm{ac}([t,\infty)\cap D(Y)))\Leftrightarrow
  (f(t)\in \mathrm{ac}([f(t),\infty)\cap D(X)))$$ and
  $$(t\in \mathrm{ac}([0,t]\cap D(Y)))\Leftrightarrow
  (f(t)\in \mathrm{ac}([0,f(t)]\cap D(X))).$$
\end{proposition}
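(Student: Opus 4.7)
The plan is to reduce to one-sided continuity, as the short remark preceding the proposition already suggests: a monotone function defined on a subset of $\mathbb{R}$ is continuous at a point if and only if it is both right- and left-continuous there. By the mirror symmetry between $[t,\infty)$ and $[0,t]$, it therefore suffices to prove the ``right-handed'' version of the proposition, which asserts that $f$ is right-continuous at $t$ and $f^{-1}$ is right-continuous at $f(t)$ if and only if $t\in \mathrm{ac}([t,\infty)\cap D(Y)) \Leftrightarrow f(t)\in \mathrm{ac}([f(t),\infty)\cap D(X))$; the left-handed statement is obtained by the mirror argument, and conjoining the two yields the proposition.

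To prove the right-handed version I would dissect into four cases by the truth values of the two accumulation conditions $P$ (at $t$) and $Q$ (at $f(t)$). When both are false, $t$ is right-isolated in $D(Y)$ and $f(t)$ is right-isolated in $D(X)$, so right-continuity of both maps is vacuous and the biconditional $P\Leftrightarrow Q$ is trivially true. When both are true, the argument used at the end of the proof of Proposition~\ref{prop2.5} adapts directly: given $\varepsilon>0$, pick $p\in D(X)\cap(f(t),f(t)+\varepsilon)$ using $Q$, put $q=f^{-1}(p)$ and $\delta=q-t>0$, and use strict monotonicity of $f$ (which follows from $f$ being an increasing bijection) to conclude $f\bigl([t,t+\delta)\cap D(Y)\bigr)\subseteq[f(t),f(t)+\varepsilon)$. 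The same trick with $P$ in place of $Q$ and $f^{-1}$ in place of $f$ gives right-continuity of $f^{-1}$ at $f(t)$.

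The substantive content lies in the two mixed cases. Suppose $P$ holds while $Q$ fails; then failure of $Q$ produces $\delta>0$ with $(f(t),f(t)+\delta)\cap D(X)=\emptyset$, while $P$ yields a sequence $t_n\in D(Y)$ with $t_n\downarrow t$, and monotonicity forces $f(t_n)\in D(X)\cap(f(t),\infty)$, whence $f(t_n)\geqslant f(t)+\delta$, ruling out right-continuity of $f$. By swapping $f$ with $f^{-1}$ and $t$ with $f(t)$, the opposite mixed case rules out right-continuity of $f^{-1}$. In both mixed cases the conjunction on the left and the biconditional on the right are simultaneously false, so the equivalence still holds.

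The main obstacle is purely organizational --- keeping track of which of $f$ or $f^{-1}$ is the one whose continuity fails in each mixed case, and verifying that the four-case split assembles cleanly into the claimed biconditional. No analytic ingredient beyond strict monotonicity of an increasing bijection and the continuity trick already employed in the proof of Proposition~\ref{prop2.5} is required.
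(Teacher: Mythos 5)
Your proof is correct and follows essentially the route the paper itself indicates: the paper offers no detailed argument for this proposition, only the reduction to one-sided continuity together with the $\varepsilon$-argument from the proof of Proposition~\ref{prop2.5}, and your four-case analysis (including the mixed cases that settle the ``only if'' direction, which the paper leaves entirely to the reader) fills in exactly that outline correctly.
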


Recall that a uniformly continuous mapping $F:X\to Y$ is a
\emph{uniform equivalence} if $F$ is bijective and the inverse
function $F^{-1}:Y\to X$ is also uniformly continuous~\cite[p.
2]{Is}.

Using Proposition~\ref{prop2.8*} and the symmetry of the relation
$\we$ we obtain
\begin{corollary}\label{cor2.8}
Let  $(X,d_X)$, $(Y,d_Y)\in \textbf{M}$ and let $\Phi:X\to Y$ be a
weak similarity. Then $\Phi$ is a uniform equivalence if and only if
~(\ref{eq2.0}) holds.
\end{corollary}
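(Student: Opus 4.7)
The plan is to reduce the question of uniform equivalence to the continuity of the scaling function $f$ and of its inverse $f^{-1}$ at the point $0$, and then apply Proposition~\ref{prop2.8*} with $t=0$ to translate that continuity into the equivalence~(\ref{eq2.0}).

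First I would recall that, by Corollary~\ref{cor2.2*}, both $\Phi$ and $f$ are bijections, so $\Phi$ is a uniform equivalence if and only if $\Phi$ and $\Phi^{-1}$ are each uniformly continuous. Using the defining identity $d_Y(\Phi(x),\Phi(y))=f^{-1}(d_X(x,y))$, together with the facts that $f^{-1}(0)=0$ and that the values of $d_X$ range exactly over $D(X)$, the uniform continuity of $\Phi$ unfolds to the statement: for every $\varepsilon>0$ there exists $\delta>0$ with $u\in D(X)$ and $u<\delta$ implying $f^{-1}(u)<\varepsilon$. This is literally the continuity of $f^{-1}\colon D(X)\to D(Y)$ at $0$. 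By the symmetry of $\we$ established in Proposition~\ref{prop2.1}, the same reasoning applied to $\Phi^{-1}$ (whose scaling function is $f$) shows that $\Phi^{-1}$ is uniformly continuous if and only if $f$ is continuous at $0$.

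Next I would apply Proposition~\ref{prop2.8*} at the point $t=0\in D(Y)$. Since $f(0)=0$, we have $[0,\infty)\cap D(Y)=D(Y)$ and $[f(0),\infty)\cap D(X)=D(X)$, whereas $[0,0]\cap D(Y)=\{0\}$ and $[0,f(0)]\cap D(X)=\{0\}$. Because a singleton has no accumulation points, the second equivalence of Proposition~\ref{prop2.8*} holds trivially (both sides are false), while the first equivalence reduces precisely to~(\ref{eq2.0}). Hence Proposition~\ref{prop2.8*} yields that the simultaneous continuity of $f$ and $f^{-1}$ at $0$ is equivalent to~(\ref{eq2.0}), which combined with the previous paragraph gives the corollary.

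No single step looks like a serious obstacle; the essential bookkeeping is to check that at $t=0$ the two conditions of Proposition~\ref{prop2.8*} collapse as indicated, and to observe that, because $f^{-1}(0)=0$ and $D(X)\subseteq[0,\infty)$, continuity of $f^{-1}$ at $0$ is formally identical to the $(\varepsilon,\delta)$-condition defining uniform continuity of $\Phi$, so no separate monotonicity argument is needed.
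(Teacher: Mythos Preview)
Your proposal is correct and follows essentially the same route as the paper, which derives the corollary tersely from Proposition~\ref{prop2.8*} together with the symmetry of $\we$. You have simply unpacked the two implicit steps: that uniform continuity of $\Phi$ (respectively $\Phi^{-1}$) is exactly continuity of $f^{-1}$ (respectively $f$) at $0$, and that at $t=0$ the two equivalences in Proposition~\ref{prop2.8*} collapse to~(\ref{eq2.0}) and a triviality.
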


It is well-known that every uniform equivalence preserves the
completeness of metric spaces (see, for example, \cite[p. 171]{Se}).
Consequently Corollary~\ref{cor2.8} implies
\begin{proposition}\label{prop2.4}
Let $(X,d_X)$, $(Y,d_Y)\in \textbf{M}$, and $X\we Y$, and $0\in
\mathrm{ac}(D(X))\cap \mathrm{ac}(D(Y))$. Suppose that $(X,d_X)$ is
complete. Then $(Y,d_Y)$ is also complete.
\end{proposition}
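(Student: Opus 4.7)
The plan is to combine Corollary~\ref{cor2.8} with the classical theorem that uniform equivalences preserve completeness (the fact cited from \cite[p.~171]{Se} just before the statement). First I would use Proposition~\ref{prop2.1} (equivalently, the definition of $\we$) to fix a realization $(f,\Phi)$ of $X\we Y$, so that $\Phi:X\to Y$ is a weak similarity between the metric spaces $(X,d_X)$ and $(Y,d_Y)$.

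Next I would verify that the biconditional~(\ref{eq2.0}) is satisfied. This is immediate from the hypothesis $0\in \mathrm{ac}(D(X))\cap \mathrm{ac}(D(Y))$: both sides of the equivalence in~(\ref{eq2.0}) are simultaneously true, so~(\ref{eq2.0}) holds trivially. Applying Corollary~\ref{cor2.8} to $\Phi$ then yields that $\Phi$ is a uniform equivalence of $(X,d_X)$ and $(Y,d_Y)$.

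Finally, since $\Phi:X\to Y$ is a uniform equivalence and $(X,d_X)$ is complete by assumption, the preservation of completeness under uniform equivalence forces $(Y,d_Y)$ to be complete as well, which is the required conclusion. I do not foresee any real obstacle here: the two ingredients needed—Corollary~\ref{cor2.8} and the completeness-preservation theorem—are both already in place, and the accumulation-point hypothesis is tailor-made to activate the former. The only sanity check worth making is that~(\ref{eq2.0}) indeed reduces to a tautology under the joint accumulation assumption at $0$.
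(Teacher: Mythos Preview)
Your proposal is correct and follows exactly the paper's own argument: the paper simply notes that Corollary~\ref{cor2.8} yields a uniform equivalence under the hypothesis $0\in\mathrm{ac}(D(X))\cap\mathrm{ac}(D(Y))$ (which indeed makes~(\ref{eq2.0}) hold trivially), and then invokes the preservation of completeness under uniform equivalences. The only superfluous remark is the appeal to Proposition~\ref{prop2.1}; the existence of a realization $(f,\Phi)$ is immediate from the definition of $X\we Y$.
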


The following example shows that there exist metric spaces which are
weak equivalent but not homeomorphic.
\begin{example}\label{ex2.6}
Let $\{r_n\}_{n=1}^{\infty}$ and $\{p_n\}_{n=1}^{\infty}$ be
strictly decreasing sequences of positive real numbers such that
$\lim\limits_{n\to\infty}r_n=0$ and
$\lim\limits_{n\to\infty}p_n=p>0$. Let $Y:=\{y_0, y_1,...,y_n,...\}$
and $X:=\{x_0,x_1,...,x_n,...\}$ be some families of pairwise
distinct points. Define semimetrics $d_X$ and $d_Y$ by the rules
\begin{equation}\label{eq2.2}
\begin{split}
d_X(x_i,x_j):=
\begin{cases}
0 &\text{if} \,  \ i=j\\
 r_{i\vee j} &\text{if} \, \ i\wedge j =0 \, \ \text{and}\, \ i\vee
 j>0\\
  r_{i\wedge j} &\text{if} \, \ i\wedge j >0 \, \ \text{and}\, \ i\neq
  j,
\end{cases}\\
d_Y(y_i,y_j):=
\begin{cases}
0 &\text{if} \,  \ i=j\\
 p_{i\vee j} &\text{if} \, \ i\wedge j =0 \, \ \text{and}\, \ i\vee
 j>0\\
  p_{i\wedge j} &\text{if} \, \ i\wedge j >0 \, \ \text{and}\, \ i\neq
  j
\end{cases}
\end{split}
\end{equation}
where $i\vee j=\max\{i,j\}$ and  $i\wedge j=\min\{i,j\}$.   It can
be proved directly that $(X,d_X)$, $(Y,d_Y) \in \textbf{UM}$. The
functions $\Phi$ and $f$ defined as
\begin{equation}\label{eq2.3}
f(0):=0,\quad f(p_i):=r_i, \quad \Phi(x_0):=y_0,\quad
\Phi(x_i):=y_i,\quad \, \ i=1,2,...
\end{equation}
 are bijective and, moreover, $f$ is increasing. It follows from~(\ref{eq2.2}) and~(\ref{eq2.3}) that
$$d_X(x_i,x_j)=f(d_Y(\Phi(x_i),\Phi(x_j))$$ for all $x_i, x_j \in
X$. Consequently we have $X \we Y$ with the realization $(f,\Phi)$.
It still remains to  note that $X$ and $Y$ are not homeomorphic,
because $X$ has the limit point $x_0$ but $Y$ is discrete.
\end{example}
In the next example we consider some ultrametric spaces $X$ and $Y$
such that:
\begin{itemize}
  \item $X$ and $Y$ are homeomorphic,
  \item $X\we Y$ with the realization $(f,\Phi)$ for which $0$ is not a
  point of continuity of the scaling function $f:D(Y)\to D(X)$.
\end{itemize}

\begin{example}\label{ex2.6*}
Let $\{r_n\}_{n=1}^{\infty}$ and $\{p_n\}_{n=1}^{\infty}$ be the
sequences from the previous example. Let
$X:=\{x_1^1,...,x_n^1,...\}\cup\{x_1^2,...,x_n^2,...\}$ and
$Y=\{y_1^1,...,y_n^1,...\}\cup\{y_1^2,...,y_n^2,...\}$ be some
families of pairwise distinct points. Define semimetrics $d_X$ and
$d_Y$ by the rules
\begin{equation}\label{eq2.4}
\begin{split}
d_X(x,y):=
\begin{cases}
0 &\text{if} \,  \ x=y\\
 p_{i} &\text{if} \, \ x=x_i^1 \text{ and } y=x_i^2
  \text{ or if } x=x_i^2 \text{ and }
 y=x_i^1\\
 p_1 &\text{otherwise},
\end{cases}\\
 d_Y(x,y):=
\begin{cases}
0 &\text{if} \,  \ x=y\\
 r_{i} &\text{if} \, \ x=y_i^1 \text{ and } y=y_i^2
  \text{ or if } x=y_i^2 \text{ and }
 y=y_i^1\\
 r_1 &\text{otherwise}.
\end{cases}
\end{split}
\end{equation}
It can be proved directly that $X$ and $Y$ are countable,
ultrametric and discrete. Consequently $X$ and $Y$ are homeomorphic.

Let $\Phi:X\to Y$ and $f:D(Y)\to D(X)$ be the functions such that
\begin{equation}\label{eq2.5}
f(r_i)=p_i \ \text{ and } \ \Phi(x_i^j)=y_i^j,\quad j=1,2,\quad
i=1,2,\dots.
\end{equation}
Then $\Phi$ and $f$ are bijective and $f$ is increasing.
Equalities~(\ref{eq2.4}) and ~(\ref{eq2.5}) imply
$d_X(x,y)=f(d_Y(\Phi(x),\Phi(y)))$ for all $x,y \in X$. Consequently
$X$ and $Y$ are weak equivalent. The point $0$ is not a point of
continuity of the function $f:D(Y)\to D(X)$ because $0\in
\mathrm{ac}D(Y)$ and $0\notin \mathrm{ac}D(X)$ and $f$ is strictly
increasing.
\end{example}
\begin{remark}\label{rem2.10}
Proposition ~\ref{prop2.5} and Corollary~\ref{cor2.8} can be proved
also when $(X,d_X)$, $(Y,d_Y)\in \textbf{SM}$ if we suppose that the
distance functions $d_X$ and $d_Y$ are continuous. See
\cite[p.9]{Bl} for some basic results related to semimetric spaces
with continuous distance functions.
\end{remark}

\section{Rigidity of distance sets, weak similarities and isometries}

It this section we have found some conditions under which the weak
similarities become isometries.
\begin{theorem}\label{th3.1}
Let $(X,d_X)\in\textbf{UM}$ and $(Y,d_Y)\in \textbf{SM}$. If $X\we
Y$, $D(X)=D(Y)$ and $X$ is compact, then $X$ and $Y$ are isometric.
\end{theorem}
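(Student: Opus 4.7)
Let $(f,\Phi)$ be a realization of $X\we Y$. By Corollary~\ref{cor2.2*} the mapping $\Phi:X\to Y$ is bijective and $f:D(Y)\to D(X)$ is a strictly increasing bijection; set $D:=D(X)=D(Y)$. Since $f$ preserves the minimum of $D$, we have $f(0)=0$. It thus suffices to prove $f=\mathrm{id}_D$, because then equation~(\ref{eq1.4}) becomes $d_X(x,y)=d_Y(\Phi(x),\Phi(y))$ and $\Phi$ is an isometry.

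The crux of the argument is a structural property of $D(X)$ forced by the compactness of $X$ together with the ultrametric inequality: \emph{no $t\in D$ with $t>0$ is a left accumulation point of $D$}, i.e., there is no sequence $t_n\in D$ with $t_n<t$ and $t_n\to t$. I would prove this by contradiction. Given such a sequence $(t_n)$, pick $x_n,y_n\in X$ with $d_X(x_n,y_n)=t_n$. By compactness of $X$, pass to subsequences with $x_n\to x$ and $y_n\to y$; continuity of the metric $d_X$ gives $d_X(x,y)=t$. For all sufficiently large $n$, $d_X(x,x_n)<t=d_X(x,y)$, so the ultrametric isosceles principle forces $d_X(x_n,y)=t$; applying it again to the triple $x_n,y,y_n$ with $d_X(y,y_n)<t=d_X(x_n,y)$ yields $d_X(x_n,y_n)=t$, contradicting $d_X(x_n,y_n)=t_n<t$.

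The remainder is order-theoretic. From the above property, every non-empty $S\subseteq D\setminus\{0\}$ has a maximum: its supremum $s^*$ lies in $D$ (closedness) and is positive, and if $s^*\notin S$ it would be a left accumulation point of $D$, contradicting the property. Hence $(D\setminus\{0\},>)$ is well-ordered. Being a strictly increasing self-bijection of $D$, the map $f$ restricts to an order-automorphism of this well-ordered set, and the only order-automorphism of a well-ordered set is the identity. Combined with $f(0)=0$ this gives $f=\mathrm{id}_D$, which completes the proof.

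The main obstacle is the structural lemma, since it is the only step that uses both the compactness of $X$ and the ultrametric inequality. Without such rigidity of $D$ the conclusion would fail: already $D=[0,1]$ admits many strictly increasing self-bijections besides the identity, and the role of the lemma is precisely to exclude such pathological distance sets from being realized as $D(X)$ for a compact ultrametric $X$.
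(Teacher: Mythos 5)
Your proof is correct and takes essentially the same route as the paper: your structural lemma (compactness plus the ultrametric isosceles property applied twice to convergent subsequences) is exactly the paper's Lemma~\ref{lemm3.6}, and the conclusion via the rigidity of converse well-ordered subsets of $\mathbb{R}^+$ is its Lemma~\ref{lemm3.5} together with Corollaries~\ref{cor3.2*} and~\ref{cor3.3}. The only cosmetic difference is that you phrase the key property in terms of left accumulation points and the closedness of $D$ (which also follows from compactness of $X$), whereas the paper phrases it as the ascending chain condition for $(D,\leqslant_D)$.
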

The next simple lemma is an original point of our considerations.
\begin{lemma}\label{lem3.2}
Let $(X,d_X)$, $(Y,d_Y) \in \textbf{SM}$ and let $r>0$. If we have
$X\we Y$ with a realization $(f,\Phi)$, then the following
conditions are equivalent
\begin{itemize}
  \item [(i)] The mapping $\Phi:X\to Y$ is a similarity with a ratio $r$.
  \item [(ii)] The function $f:D(Y)\to D(X)$ satisfies the equality
  \begin{equation}\label{eq3.0}
    f(t)=\frac{1}{r}t
  \end{equation}
for every $t\in D(Y)$.
\end{itemize}
\end{lemma}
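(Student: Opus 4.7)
The plan is to establish both directions directly from Definition~\ref{def1.1} and Corollary~\ref{cor2.2*}, with the surjectivity of $\Phi$ providing the bridge between the pointwise equation on $D(Y)$ and the metric equation on $Y \times Y$.

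First I would handle (ii)$\Rightarrow$(i). Assuming $f(t)=t/r$ on $D(Y)$, I substitute this into the defining equation~(\ref{eq1.4}), which immediately yields $d_X(x,y)=\frac{1}{r}d_Y(\Phi(x),\Phi(y))$, i.e.\ $d_Y(\Phi(x),\Phi(y))=r\,d_X(x,y)$ for all $x,y\in X$. Corollary~\ref{cor2.2*} gives that $\Phi$ is bijective, so $\Phi$ meets the definition of a similarity with ratio $r$.

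For (i)$\Rightarrow$(ii), suppose $\Phi$ is a similarity with ratio $r$, so that $d_Y(\Phi(x),\Phi(y))=r\,d_X(x,y)$. Combining with~(\ref{eq1.4}) I obtain
\[
 f\bigl(d_Y(\Phi(x),\Phi(y))\bigr)=d_X(x,y)=\frac{1}{r}\,d_Y(\Phi(x),\Phi(y))
\]
for every $x,y\in X$. Now fix an arbitrary $t\in D(Y)$; by definition there exist $y_1,y_2\in Y$ with $d_Y(y_1,y_2)=t$, and since $\Phi$ is surjective we can write $y_i=\Phi(x_i)$ for suitable $x_i\in X$. Then $t=d_Y(\Phi(x_1),\Phi(x_2))$, and plugging into the displayed identity yields $f(t)=t/r$, as required.

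There is no real obstacle here; the only point that needs care is to justify that the displayed identity, a priori holding only on values of the form $d_Y(\Phi(x),\Phi(y))$, in fact holds on all of $D(Y)$. This is precisely where the surjectivity of $\Phi$ is used, allowing any pair in $Y\times Y$ to be realized as $(\Phi(x_1),\Phi(x_2))$ and hence any element of $D(Y)$ to appear as $d_Y(\Phi(x_1),\Phi(x_2))$. The bijectivity of $\Phi$ supplied by Corollary~\ref{cor2.2*} is what finally certifies that $\Phi$ is a similarity in the sense defined in the introduction.
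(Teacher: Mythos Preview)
Your proof is correct and follows essentially the same approach as the paper's own argument: both combine the defining equation~(\ref{eq1.4}) with the similarity equation $d_Y(\Phi(x),\Phi(y))=r\,d_X(x,y)$ and invoke bijectivity of $\Phi$ (via Corollary~\ref{cor2.2*}). Your version is in fact slightly more explicit than the paper's in spelling out why the identity $f(t)=t/r$ extends to all of $D(Y)$ via surjectivity of $\Phi$, which the paper leaves implicit.
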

\begin{proof}
If $\Phi$ is a similarity with a ratio $r$, then we have
  \begin{equation}\label{eq3.1}
    d_Y(\Phi(x),\Phi(y))=rd_X(x,y)
  \end{equation}
for all $x,y\in X$. This equality and ~(\ref{eq1.4}) imply
$$d_X(x,y)=\frac{1}{r}d_Y(\Phi(x),\Phi(y))=f(d_Y(\Phi(x),\Phi(y)))$$
for all $x,y \in X$. Consequently ~(\ref{eq3.0}) holds for every
$t\in D(Y)$, so that (i)$\Rightarrow$(ii) follows.

Analogously ~(\ref{eq3.0}) implies~(\ref{eq3.1}). Since $\Phi$ is a
bijection, from ~(\ref{eq3.1}) follows that $\Phi$ is a similarity.
\end{proof}
\begin{corollary}\label{cor3.2*}
Let $(X,d_X)$, $(Y,d_y)\in \textbf{SM}$ and let $\Phi:X\to Y$ be a
weak similarity with a scaling function $f$. Then $\Phi$  is an
isometry if and only if $f(t)=t$ for every $t\in D(Y)$.
\end{corollary}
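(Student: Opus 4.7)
The plan is to deduce the corollary directly from Lemma~\ref{lem3.2} by specializing the ratio. Recall that, by definition, an isometry is precisely a similarity with ratio $r=1$. So the corollary will follow if I just plug $r=1$ into the biconditional of the lemma and unpack both sides.

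More concretely, I would first note that the hypothesis of the corollary (namely that $\Phi:X\to Y$ is a weak similarity with scaling function $f$) is exactly the hypothesis $X\we Y$ with realization $(f,\Phi)$ required by Lemma~\ref{lem3.2}. Then I would invoke the lemma with $r=1$. Condition~(i) becomes ``$\Phi$ is a similarity with ratio $1$,'' i.e., $\Phi$ is an isometry, while condition~(ii) becomes $f(t)=\frac{1}{1}t=t$ for every $t\in D(Y)$. The equivalence (i)$\Leftrightarrow$(ii) then yields exactly the claim.

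There is no real obstacle here: the corollary is essentially a substitution in the already-proved Lemma~\ref{lem3.2}, so the write-up should be one or two sentences. The only thing worth emphasizing in the text is the trivial identification of isometries with similarities of ratio~$1$, which justifies the specialization $r=1$ and makes the application of the lemma literal rather than an appeal to a variant.
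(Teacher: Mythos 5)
Your proposal is correct and matches the paper's intent exactly: the corollary is stated immediately after Lemma~\ref{lem3.2} with no written proof, precisely because it is the specialization $r=1$ of that lemma combined with the paper's definition of isometries as similarities of ratio $1$. Nothing is missing.
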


A partially ordered set $P$ is called \emph{rigid} if there is one
and only one order preserving bijection $F:P\to P$, (see \cite[p.
343]{Har}). Of course if $P$ is rigid, then the unique order
preserving bijection of $P$ is the identical mapping.

 Corollary~\ref{cor3.2*} implies the following
\begin{corollary}\label{cor3.3}
Let $X,Y\in\textbf{SM}$ and let $D:=D(X)=D(Y)$. If $D$ is rigid and
$X\we Y$ with a realization $(f,\Phi)$, then the weak similarity
$\Phi:X\to Y$ is an isometry.
\end{corollary}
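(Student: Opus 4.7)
The plan is essentially one application of rigidity, combined with the two preceding corollaries. First I would invoke Corollary~\ref{cor2.2*}, applied to the realization $(f,\Phi)$ of $X \we Y$, to conclude that the scaling function $f\colon D(Y) \to D(X)$ is a bijection. Since by hypothesis $D(X) = D(Y) = D$, this makes $f$ a bijection of $D$ onto itself, and by the definition of weak similarity $f$ is strictly increasing, hence an order preserving bijection of the partially ordered set $D$.

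Next I would use the rigidity assumption on $D$: by definition, the unique order preserving bijection of a rigid poset is the identity, so $f(t) = t$ for every $t \in D = D(Y)$. Finally I would apply Corollary~\ref{cor3.2*} to conclude that $\Phi\colon X \to Y$ is an isometry.

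There is no real obstacle here; the argument is a three-line chain $(\text{realization is bijective}) \Rightarrow (\text{order iso of }D) \Rightarrow (\text{identity}) \Rightarrow (\Phi\text{ is an isometry})$, using Corollary~\ref{cor2.2*}, the rigidity hypothesis, and Corollary~\ref{cor3.2*} in that order. The only point worth being careful about is to note explicitly that $f$ is not merely an increasing function between two a priori different sets but, thanks to $D(X) = D(Y)$, an order preserving self-bijection of a single poset $D$, which is exactly the setup in which rigidity applies.
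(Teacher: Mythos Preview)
Your proposal is correct and matches the paper's approach exactly: the paper presents Corollary~\ref{cor3.3} as an immediate consequence of Corollary~\ref{cor3.2*}, with the implicit argument being precisely your chain (bijectivity of $f$ from Corollary~\ref{cor2.2*}, rigidity forcing $f$ to be the identity on $D$, then Corollary~\ref{cor3.2*}).
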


To obtain conditions under which $D$ is rigid we recall some notions
from the theory of ordered sets.

A total-ordered set $(S,\leqslant)$ is \emph{well-ordered} if every
nonempty subset of $S$ has a least element. In this case the
relation $\leqslant$ is referred to as a \emph{well-ordering}.
Similarly a total order $\leqslant$ on a set $S$ is a \emph{converse
well-ordering} if every nonempty subset of $S$ has a greatest
element. In what follows we consider a subset $D$ of $\mathbb{R}^+$
together with the standard order $\leqslant_D$ induced from
$(\mathbb{R}^+,\leqslant)$.
\begin{lemma}\label{lemm3.5}
Let $(D,\leqslant_D)$ be a nonempty subset of $\mathbb R^+$. If
$\leqslant_D$ is a well-ordering or a converse well-ordering, then
$D$ is rigid.
\end{lemma}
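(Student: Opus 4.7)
The plan is to show that the only order-preserving bijection $F\colon D\to D$ is the identity, since by definition this is exactly what rigidity of $D$ asserts. Any order-preserving bijection of a totally ordered set is automatically strictly increasing, so $F^{-1}$ exists on $D$ and is also order-preserving.

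I would handle the well-ordering case first, by contradiction. Let $A:=\{x\in D:F(x)\neq x\}$ and assume $A\neq\varnothing$. Because $\leqslant_D$ is a well-ordering, $A$ has a least element $a$; by minimality, $F(y)=y$ for every $y\in D$ with $y<_D a$. I would then split into two subcases. If $F(a)<_D a$, then $F(a)\in D\setminus A$, so $F(F(a))=F(a)$, and injectivity of $F$ gives $F(a)=a$, contradicting $a\in A$. If instead $F(a)>_D a$, I would examine $b:=F^{-1}(a)\in D$. The possibility $b<_D a$ forces $F(b)=b$ and hence $a=b<_D a$, a contradiction; $b=a$ would force $F(a)=a$; and $b>_D a$ would force $F(b)>_D F(a)>_D a$ by strict monotonicity, contradicting $F(b)=a$. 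Every subcase is impossible, so $A=\varnothing$ and $F=\mathrm{id}_D$.

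The converse well-ordering case reduces to the previous one by reversing the order: $(D,\geqslant_D)$ is then a well-ordered set, and any $\leqslant_D$-preserving bijection $F$ is automatically $\geqslant_D$-preserving as well, so the same argument applies verbatim.

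The only step that is not immediate is the subcase $F(a)>_D a$, which cannot be closed off by looking at $F(a)$ and invoking the inductive hypothesis directly; instead one must pass to the preimage $F^{-1}(a)$ and exploit that an order-preserving bijection is strictly monotone. Everything else is a standard transfinite-induction-style argument and should go through without difficulty.
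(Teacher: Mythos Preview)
Your argument is correct. The paper's own proof is much terser: it simply cites the classical fact that every order-preserving bijection of a well-ordered set onto itself is the identity (referring to \cite[p.~4]{Har}), and then invokes the duality principle for the converse well-ordered case. Your proof supplies exactly the details behind that cited fact, via the standard minimal-counterexample argument, and your reduction of the converse case by passing to the reversed order $(D,\geqslant_D)$ is the same duality the paper uses. So the overall architecture is identical; you have just unpacked the textbook reference.

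One small remark: your handling of the subcase $F(a)>_D a$ via the preimage $b=F^{-1}(a)$ is fine, but it can be shortened. Once you know $F(y)=y$ for all $y<_D a$, strict monotonicity gives $F(a)\geqslant_D a$ immediately; applying the same observation to $F^{-1}$ (which also fixes every $y<_D a$) gives $F^{-1}(a)\geqslant_D a$, i.e.\ $a\geqslant_D F(a)$, and the two together force $F(a)=a$. This avoids the three-way case split.
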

\begin{proof}
It is well known that every order preserving mapping of a
well-ordered set onto itself  is the identity mapping (see, for
example,\cite[p. 4]{Har}). Hence $D$ is rigid if $\leqslant_D$ is a
well-ordering. Using the duality principle~\cite[p. 47]{Har} we
obtain that $D$ is also rigid when $\leqslant_D$ is a converse
well-ordering.
\end{proof}
A poset $P$ is said to satisfy the ascending chain condition (ACC)
if given arbitrary infinite sequence of elements of $P$
$$p_1\leqslant p_2\leqslant \dots,$$ then there is $n\in\mathbb N$
such that $p_n=p_{n+1}=p_{n+2}\dots$. It is known that a
total-ordered set  is a converse well-ordered set if and only if ACC
holds.
\begin{lemma}\label{lemm3.6}
Let $(X,d_X)$ be a compact nonvoid ultrametric space and let
$D:=D(X)$. Then the ordered set $(D,\leqslant_D)$ is a converse
well-ordered set.
\end{lemma}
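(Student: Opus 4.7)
The plan is to invoke the ACC characterization of converse well-ordering stated just before the lemma: it suffices to rule out the existence of any strictly increasing infinite sequence $t_1 < t_2 < \cdots$ in $D$. I will assume such a sequence exists and derive a contradiction from the compactness of $X$ together with the ultrametric inequality.

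First I note that since $X$ is compact, $\diam X < \infty$, so $(t_n)$ is bounded and converges to some $t := \sup_n t_n$ with $t_n < t$ for every $n$ and $t > 0$ (because $t \geq t_2 > t_1 \geq 0$). For each $n$, choose $x_n, y_n \in X$ with $d_X(x_n, y_n) = t_n$. By compactness of $X \times X$, I extract a subsequence, relabeled $(x_n, y_n)$, along which $x_n \to x^*$ and $y_n \to y^*$. Since every ultrametric is a metric, the triangle inequality gives continuity of $d_X$ on $X \times X$, so $d_X(x^*, y^*) = \lim_n t_n = t$.

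The decisive step is now the ultrametric ``isosceles'' principle, an immediate consequence of the ultrametric inequality: if $d_X(a, b) \neq d_X(b, c)$, then $d_X(a, c) = \max\{d_X(a, b), d_X(b, c)\}$. Since $d_X(x_n, x^*) \to 0$ and $d_X(y_n, y^*) \to 0$ while $t > 0$, for all sufficiently large $n$ both of these distances lie strictly below $t$. Applying the isosceles principle to the triple $(x_n, x^*, y^*)$ yields $d_X(x_n, y^*) = t$; applying it once more to $(x_n, y^*, y_n)$ then yields $d_X(x_n, y_n) = t$. This contradicts $d_X(x_n, y_n) = t_n < t$, completing the argument.

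I do not expect any substantial obstacle; the only place calling for a little care is the successive use of the isosceles principle, where one must keep visible the strict inequalities $d_X(x_n, x^*) < t$ and $d_X(y_n, y^*) < t$ and apply them to the triples in the correct direction at each stage.
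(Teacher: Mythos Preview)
Your proof is correct and follows essentially the same route as the paper's: assume a strictly increasing sequence of distances, realize each by a pair of points, pass to a convergent subsequence using compactness, and then apply the ultrametric isosceles principle twice to force the pair distances to equal the limiting distance, contradicting strict increase. The only cosmetic differences are that you appeal to compactness of $X\times X$ in one step (rather than two successive subsequence extractions) and you note in advance that the supremum $t$ is a strict upper bound for every $t_n$, which slightly streamlines the final contradiction.
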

\begin{proof}
Suppose that  ACC  does not hold for $(D, \leqslant)$. Then there is
an infinite strictly increasing sequence
\begin{equation}\label{eq1.12}
r_1<r_2<\dots<r_n<r_{n+1}<\dots
\end{equation}
with $r_n\in D$, $n=1,2,...$. Let us denote by  $x_n$ and $y_n$ the
points of $X$ such that $d_X(x_n,y_n)=r_n$, $n=1,2,\dots$.

Since $(X,d_X)$ is compact, there is a strictly increasing sequence
$\{n_k\}_{k\in \mathbb{N}}$ of positive integer numbers such that
the sequences $\{x_{n_k}\}_{k\in \mathbb N}$ and $\{y_{n_k}\}_{k\in
\mathbb N}$ are convergent. Write
\begin{equation}\label{eq1.13}
x^*:=\lim\limits_{k\to\infty}x_{n_k},\quad
y^*:=\lim\limits_{k\to\infty}y_{n_k}
\end{equation}
and $r^*:=d_X(x^*,y^*)$. Since the function $d_X:X\times X\to D$ is
continuous, we have $r^*=\lim\limits_{k\to\infty}r_{n_k}$.
Using~(\ref{eq1.12}) we see that $r^*>0$. Relations ~(\ref{eq1.13})
are equivalent to
$$
\lim\limits_{k\to\infty}d_X(x^*,x_{n_k})=\lim\limits_{k\to\infty}d_X(y^*,y_{n_k})=0.
$$
Consequently there is $k_0\in \mathbb N$ such that
\begin{equation}\label{eq1.14}
d_X(x^*,x_{n_k})<r^*\mbox{ and }d_X(y^*,y_{n_k})<r^*
\end{equation}
for every $k\geqslant k_0$. Considering the triangle ($x^*$, $y^*$,
$x_{n_k}$) and using the first inequality from~(\ref{eq1.14}), we
see that the ultrametric inequality implies
$d_X(x^*,y^*)=d_X(y^*,x_{n_k})$ (see Figure 1). Similarly, the last
equality and the second inequality from~(\ref{eq1.14}) imply
$d_X(y^*,x_{n_k})=d_X(x_{n_k},y_{n_k})$.
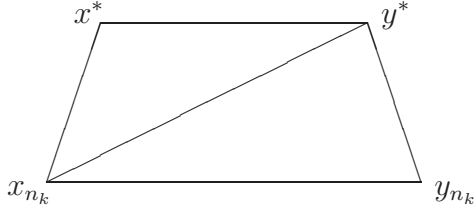
\begin{figure}[h]
\begin{center}
\begin{picture}(200,100)
\put(15,10){$x_{n_k}$} \put(175,10){$y_{n_k}$} \put(40,75){$x^*$}
\put(155,75){$y^*$} \put(30,15){\line(1,3){20}}
\put(30,15){\line(2,1){120}} \put(170,15){\line(-1,3){20}}
\put(30,15){\line(1,0){140}} \put(50,75){\line(1,0){100}}
\end{picture}
\end{center}
\caption{The quadruple $x^*, y^*, x_{n_k}, y_{n_k}$ contains two
distinct sides of the maximal length.}
\end{figure}
Consequently if $k>k_0$, then $d(x_{n_k},y_{n_k})=d(x^*,y^*)$,
contrary to ~(\ref{eq1.12}). Hence the poset $(D,\leqslant_D)$
satisfies ACC, i.e., $(D,\leqslant_D)$ is a converse well-ordered
set.
\end{proof}
\begin{proof}[Proof of Theorem~\ref{th3.1}]
Suppose that $X\we Y$, $D:=D(X)=D(Y)$ and $X$ is compact. We must
show that $X$ and $Y$ are isometric. Let $(f,\Phi)$ be a realization
of $X\we Y$. By Corollary~\ref{cor3.2*}, to prove that $X$ and $Y$
are isometric it suffices to show that $f$ is the identical
function. Lemma~\ref{lemm3.5} implies that $f:D\to D$ is identical
if $(D,\leqslant_D)$ is a converse well-ordered set. Since $X$ is
compact and ultrametric, it follows from Lemma~\ref{lemm3.6} that
$\leqslant_D$ is a converse well-ordering. Consequently $X$ and $Y$
are isometric.
\end{proof}
The above given proof also justifies the following
\begin{corollary}\label{cor3.7}
If $\Phi:X\to X$ is a weak similarity and $X$ is a compact
ultrametric space, then $\Phi$ is an isometry.
\end{corollary}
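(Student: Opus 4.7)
The plan is to observe that the argument used in the proof of Theorem~\ref{th3.1} applies verbatim when $Y=X$, because the hypothesis $D(X)=D(Y)$ is automatic in that case. So essentially no new ideas are required; I only need to check that the chain Lemma~\ref{lemm3.6} $\Rightarrow$ Lemma~\ref{lemm3.5} $\Rightarrow$ Corollary~\ref{cor3.2*} goes through with a single space $X$ playing both roles.

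Concretely, I would fix a realization $(f,\Phi)$ of the weak equivalence $X\we X$ and write $D:=D(X)$. By Corollary~\ref{cor2.2*}, the scaling function $f:D\to D$ is a bijection, and by Definition~\ref{def1.1} it is strictly increasing; so $f$ is an order-preserving bijection of the poset $(D,\leqslant_D)$ onto itself. Since $X$ is compact and ultrametric, Lemma~\ref{lemm3.6} says that $\leqslant_D$ is a converse well-ordering, and then Lemma~\ref{lemm3.5} yields the rigidity of $D$. Rigidity forces the unique order-preserving bijection $D\to D$ to be the identity, so $f(t)=t$ for every $t\in D$.

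Having reduced the scaling function to the identity, Corollary~\ref{cor3.2*} immediately upgrades $\Phi$ from a weak similarity to an isometry, which is the conclusion of the corollary.

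There is no real obstacle here; the statement is a direct specialization of Theorem~\ref{th3.1}. The only point worth flagging is that one must cite Corollary~\ref{cor2.2*} to ensure that $f$ is a genuine bijection of $D$ (not merely an injection) before invoking the rigidity lemma, since the rigidity notion presupposes a bijective self-map.
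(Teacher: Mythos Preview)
Your proposal is correct and follows exactly the route the paper intends: the paper does not give a separate proof but simply notes that ``the above given proof also justifies'' the corollary, meaning the chain Lemma~\ref{lemm3.6} $\Rightarrow$ Lemma~\ref{lemm3.5} $\Rightarrow$ Corollary~\ref{cor3.2*} from the proof of Theorem~\ref{th3.1} applies with $Y=X$. Your explicit invocation of Corollary~\ref{cor2.2*} to guarantee bijectivity of $f$ before applying rigidity is a welcome clarification that the paper leaves implicit.
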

Let $(X,d_X)$, $(Y,d_Y)\in\textbf{SM}$. If $X\we Y$, then $D(X)$ and
$D(Y)$ must be ``isomorphic as ordered sets''. Let us recall some
related definitions.
\begin{definition}\label{c1} \cite[p. 45]{Har}
Let $(P_1,\leqslant)$ and $(P_2,\leqslant)$ be posets. Then
$(P_1,\leqslant)$ is said to be \emph{order-isomorphic}, if there
exists a bijection $f:P_1\to P_2$ with the property: For all $a,b\in
P_1$ there holds $a\leqslant b \Leftrightarrow f(a)\leqslant f(b)$.
\end{definition}
\begin{definition}\label{c2} \cite[p. 36]{Har}
The class of all posets which are order-isomorphic to a given poset
$(P,\leqslant)$ is called the \emph{order-type} of $(P,\leqslant)$
\end{definition}
The order-type of a poset $P$ will be defined as $\mathrm{tp} P$.
From definitions ~\ref{def1.1}, ~\ref{c1}, and ~\ref{c2} we obtain
\begin{proposition}\label{c3}
Let $(X,d_X)$, $(Y,d_Y)\in \textbf{SM}$. If $X\we Y$, then the
equality $\mathrm{tp}(D(X))=\mathrm{tp}(D(Y))$ holds.
\end{proposition}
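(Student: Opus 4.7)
The plan is to read the conclusion directly off the realization of the weak equivalence. Suppose $(f,\Phi)$ is a realization of $X\we Y$, so that $f:D(Y)\to D(X)$ is strictly increasing and $d_X=f\circ d_Y\circ(\Phi\otimes\Phi)$. By Corollary~\ref{cor2.2*} the scaling function $f$ is already known to be a bijection from $D(Y)$ onto $D(X)$, so only the order-theoretic part needs justification.

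To establish that $f$ is an order-isomorphism with respect to the standard orders $\leqslant_{D(Y)}$ and $\leqslant_{D(X)}$ inherited from $(\mathbb{R}^+,\leqslant)$, I would verify the equivalence $a\leqslant b \Leftrightarrow f(a)\leqslant f(b)$ for $a,b\in D(Y)$. The forward direction is immediate from the fact that $f$ is strictly increasing (combined with $f$ being a function, which handles $a=b$). For the reverse direction, if one had $a>b$, the strict monotonicity would force $f(a)>f(b)$, contradicting $f(a)\leqslant f(b)$; hence $a\leqslant b$. Thus $f$ fulfills the requirement of Definition~\ref{c1}, so $(D(Y),\leqslant_{D(Y)})$ and $(D(X),\leqslant_{D(X)})$ are order-isomorphic, and Definition~\ref{c2} then gives the desired equality $\mathrm{tp}(D(X))=\mathrm{tp}(D(Y))$.

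There is essentially no obstacle here; the statement is a direct corollary of the definitions together with Corollary~\ref{cor2.2*}. The only point worth a sentence in the write-up is the mild observation that a strictly increasing bijection between totally ordered subsets of $\mathbb{R}$ is automatically an order-isomorphism in the two-sided sense demanded by Definition~\ref{c1}; in a more general poset setting this would not be automatic, but the linearity of the order on $D(X),D(Y)\subseteq\mathbb{R}^+$ makes it trivial.
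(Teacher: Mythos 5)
Your proof is correct and is essentially the argument the paper intends: the paper states Proposition~\ref{c3} as following directly from Definitions~\ref{def1.1}, \ref{c1} and \ref{c2}, and your write-up simply makes explicit that the scaling function $f$, being a strictly increasing bijection of $D(Y)$ onto $D(X)$ by Corollary~\ref{cor2.2*}, is an order-isomorphism of these totally ordered subsets of $\mathbb{R}^+$. No gap; the level of detail you add is exactly what the paper leaves to the reader.
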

\begin{corollary}\label{c4}
Let $(X,d_X)$, $(Y,d_Y)\in \textbf{SM}$ and let $X\we Y$. If $D(X)$
is rigid, then $D(Y)$ is also rigid.
\end{corollary}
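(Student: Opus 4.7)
The plan is to transfer rigidity along the scaling function by a simple conjugation argument. Since $X\we Y$ holds, fix a realization $(f,\Phi)$; by Corollary~\ref{cor2.2*} the scaling function $f\colon D(Y)\to D(X)$ is a bijection, and by Definition~\ref{def1.1} it is strictly increasing. A strictly increasing bijection between subsets of $(\mathbb{R}^+,\leqslant)$ is automatically an order-isomorphism in the sense of Definition~\ref{c1}, so $f$ witnesses $\mathrm{tp}(D(Y))=\mathrm{tp}(D(X))$ (this is essentially Proposition~\ref{c3}).

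Next I would take an arbitrary order-preserving bijection $g\colon D(Y)\to D(Y)$ and show it is the identity. Form the composition
\[
h := f\circ g\circ f^{-1}\colon D(X)\to D(X).
\]
As a composition of three order-preserving bijections, $h$ is itself an order-preserving bijection of $D(X)$ onto itself. The assumed rigidity of $D(X)$ then forces $h=\mathrm{id}_{D(X)}$, so that $f\circ g = f$ and consequently $g=\mathrm{id}_{D(Y)}$. Since $g$ was arbitrary, $D(Y)$ is rigid.

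There is no real obstacle in this argument; the content is entirely packaged into the observation that rigidity is an invariant of order-type, together with the fact (already established in the proof of Proposition~\ref{prop2.1}) that the scaling function of a weak similarity is a bijective order-isomorphism between the distance sets. One could equivalently appeal directly to Proposition~\ref{c3} and the general principle that order-isomorphic posets are simultaneously rigid or not; the conjugation above simply makes this principle explicit for the case at hand.
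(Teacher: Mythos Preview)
Your argument is correct and is essentially the paper's approach made explicit: the paper states this as an immediate corollary of Proposition~\ref{c3} (equal order-types), relying implicitly on the fact that rigidity is an invariant of order-type, and your conjugation $h=f\circ g\circ f^{-1}$ is precisely the standard verification of that fact.
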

\begin{proposition}\label{c5}
Let $(X,d_X)$, $(Y,d_Y)\in \textbf{SM}$ and let $\Phi_i:X\to Y$,
$i=1,2$, be weak similarities. If $D(X)$ is rigid, then there are
isometries $F:X\to X$ and $\Psi:Y\to Y$ such that
\begin{equation}\label{y1}
\Phi_2 = \Phi_1\circ F \ \mbox{  and  }\ \Phi_2=\Psi\circ \Phi_1.
\end{equation}
\end{proposition}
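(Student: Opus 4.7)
The plan is to build the required isometries as compositions $F:=\Phi_1^{-1}\circ\Phi_2$ and $\Psi:=\Phi_2\circ\Phi_1^{-1}$ and then invoke the rigidity assumption to promote them from weak similarities to genuine isometries.

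First I would check that these compositions make sense and are weak similarities. By Corollary~\ref{cor2.2*}, the map $\Phi_1$ is a bijection, so $\Phi_1^{-1}:Y\to X$ is well defined; moreover, the symmetry part of Proposition~\ref{prop2.1} shows that $\Phi_1^{-1}$ is itself a weak similarity. Then Corollary~\ref{cor2.1} (composition of weak similarities) gives that $F=\Phi_1^{-1}\circ\Phi_2:X\to X$ and $\Psi=\Phi_2\circ\Phi_1^{-1}:Y\to Y$ are weak similarities.

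Next I would apply rigidity. Since $F$ is a weak similarity from $X$ to $X$, the distance sets on the two sides coincide trivially, and $D(X)$ is rigid by hypothesis; Corollary~\ref{cor3.3} then forces $F$ to be an isometry. For $\Psi$, I first note that by Corollary~\ref{c4}, the rigidity of $D(X)$ together with $X\we Y$ yields the rigidity of $D(Y)$; applying Corollary~\ref{cor3.3} to the weak similarity $\Psi:Y\to Y$ with $D(Y)=D(Y)$ rigid again gives that $\Psi$ is an isometry.

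Finally, the stated identities are immediate: $\Phi_1\circ F=\Phi_1\circ\Phi_1^{-1}\circ\Phi_2=\Phi_2$ and $\Psi\circ\Phi_1=\Phi_2\circ\Phi_1^{-1}\circ\Phi_1=\Phi_2$, which is~(\ref{y1}). I do not anticipate a substantial obstacle here; the only point that deserves a line of comment is the invocation of Corollary~\ref{c4} to transport rigidity from $D(X)$ to $D(Y)$, since the second identity in~(\ref{y1}) genuinely needs this transported rigidity on the $Y$-side.
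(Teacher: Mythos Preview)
Your argument is correct and follows essentially the same route as the paper: define $F=\Phi_1^{-1}\circ\Phi_2$ and $\Psi=\Phi_2\circ\Phi_1^{-1}$, note they are weak similarities, and then use rigidity together with Corollary~\ref{cor3.3} to upgrade them to isometries. The paper is terser---it simply says the second equality ``can be proved similarly''---whereas you spell out the use of Corollary~\ref{c4} to transfer rigidity to $D(Y)$, which is the only extra ingredient needed on the $Y$-side.
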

\begin{proof}
By Corollary~\ref{cor2.2*} the function $F:=\Phi^{-1}_1\circ \Phi_2$
is a weak similarity. Suppose that $D(X)$ is rigid. Then
Corollary~\ref{cor3.3} implies that $F$ is an isometry on $X$. Thus
$$
\Phi_2 = \Phi_1\circ F
$$
where $F:X\to X$ is an isometry. The first equality from
~(\ref{y1}) is proved. The second can be proved similarly.
\end{proof}
The structure of rigid total-ordered sets was described by A. C.
Morel in~\cite{Mo}. To apply his result in our studies we recall the
following
\begin{definition}\label{c6}
Let $(I,\prec)$ be a poset, and let $(P_i,\leqslant_i)$ be posets
for $i\in I$ with pairwise disjoint carrier sets $P_i$. Then we
define the \emph{ordered sum} $\sum\limits_{i\in
I}(P_i,\leqslant_i)$ of the posets $(P_i,\leqslant_i)$ over the
ordered argument $(I,\prec)$ as the poset $(V,\leqslant)$, where
$V:=\bigcup\limits_{i\in I}P_i$ and where $\leqslant$ is now defined
by: For $a,b \in V$ we put $a\leqslant b$ $\Leftrightarrow$ $a$ and
$b$ are in the same summand $P_i$, and there holds $a\leqslant_i b$,
or $a\in P_i$ and $b\in P_j$ with $i\neq j$ and $i\prec j$.
\end{definition}
\begin{definition}\label{c7}
Let $(I,\prec)$ be a poset, and let $\tau_i$, $i\in I$ be
order-types.  We take a poset $(P_i,\leqslant_i)$ with
$\mathrm{tp}(P_i)=\tau_i$ for every $i\in I$, and so that all $P_i$
are pairwise disjoint. Then the sum $\sum\limits_{i\in I}\tau_i$ is
the order-type of the ordered sum $\sum\limits_{i\in
I}(P_i,\leqslant_i)$. If we have $I=\{1,...,n\}$ with the standard
order $\leqslant$, then we set $
\tau_1+\cdots+\tau_n:=\mathrm{tp}\left(\sum\limits_{i\in
I}(P_i,\leqslant_i)\right). $
\end{definition}

In the case when all order-types $\tau_i$ are the same we define the
product
$$
\tau\mathrm{tp}(I):=\mathrm{tp}\left(\sum\limits_{i\in
I}(P_i,\leqslant_i)\right)
$$
where $\tau$ is the common order-type of the posets
$(P_i,\leqslant_i)$, $i\in I$.

\begin{lemma}[\cite{Mo} ]\label{c7*}
Let $A$ be a nonempty total-ordered set. The following statements
are equivalent.
\begin{itemize}
  \item [(i)] There is a nonidentical order preserving bijection $f:A\to
  A$.
  \item [(ii)] There are total-ordered sets $A_1$, $A_2$, $A_3$,
  $A_2\neq\varnothing$, such that
  $$
  \mathrm{tp}A=\mathrm{tp}A_1+\mathrm{tp}A_2\mathrm{tp}\mathbb
  Z+\mathrm{tp}A_3
  $$
\end{itemize}
where $\mathrm{tp}\mathbb Z$ is the order-type of the set $\mathbb
Z$ of all integer numbers with the standard order.
\end{lemma}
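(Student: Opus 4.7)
The plan is to treat the two implications separately: (ii)$\Rightarrow$(i) is an easy shift construction, while (i)$\Rightarrow$(ii) will require cutting $A$ along a single $f$-orbit.

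To establish (ii)$\Rightarrow$(i), I would realize the middle summand concretely as a disjoint union $\bigcup_{n\in\mathbb Z}A_2^{(n)}$ of pairwise order-isomorphic copies of $A_2$ arranged along $\mathbb Z$, and then define $g\colon A\to A$ to be the identity on $A_1\cup A_3$ while sending each element of $A_2^{(n)}$ to its counterpart in $A_2^{(n+1)}$. This $g$ is plainly a bijection; it preserves order because it respects the order inside each summand and is compatible with the $\mathbb Z$-ordering of the middle; and since $A_2\neq\varnothing$ it moves every element of the middle block, so $g\neq\mathrm{id}$.

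For (i)$\Rightarrow$(ii), let $f$ be a nonidentical order preserving bijection of $A$ and pick $a_0\in A$ with $f(a_0)\neq a_0$; after replacing $f$ by $f^{-1}$ if necessary I may assume $f(a_0)>a_0$. Monotonicity of $f$ and of $f^{-1}$ then forces the orbit $\{f^n(a_0):n\in\mathbb Z\}$ to be strictly increasing in $n$ and hence order-isomorphic to $\mathbb Z$. I would take
\[
A_2:=\{a\in A:a_0\leqslant a<f(a_0)\},\qquad A_1:=\{a\in A:a<f^n(a_0)\text{ for every }n\in\mathbb Z\},
\]
\[
A_3:=\{a\in A:a\geqslant f^n(a_0)\text{ for every }n\in\mathbb Z\},
\]
observe that $A_2$ contains $a_0$ and is therefore nonempty, and note that $f^n(A_2)=\{a\in A:f^n(a_0)\leqslant a<f^{n+1}(a_0)\}$, so the iterates $\{f^n(A_2)\}_{n\in\mathbb Z}$ form pairwise disjoint, pairwise order-isomorphic convex pieces stacked along $\mathbb Z$ between $A_1$ and $A_3$.

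The step I expect to be the main technical point is verifying that $A_1$, the iterates $f^n(A_2)$, and $A_3$ actually partition $A$. For an arbitrary $a\in A$ I would examine the set $S_a:=\{n\in\mathbb Z:f^n(a_0)\leqslant a\}$; the strict monotonicity of the orbit makes $S_a$ a lower set in $\mathbb Z$, so $S_a$ is either $\varnothing$, all of $\mathbb Z$, or an initial segment $(-\infty,N]$, placing $a$ in $A_1$, in $A_3$, or in $f^N(A_2)$ respectively. The convexity of the three blocks is then immediate from their definitions, and the required decomposition
\[
\mathrm{tp}A=\mathrm{tp}A_1+\mathrm{tp}A_2\,\mathrm{tp}\mathbb Z+\mathrm{tp}A_3
\]
follows at once.
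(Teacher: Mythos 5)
Your argument is correct and complete. Note, however, that the paper itself offers no proof of this lemma: it is imported verbatim from Morel's paper \cite{Mo} (``Ordering relations admitting automorphisms''), so there is nothing in the text to compare against line by line. What you have written is essentially the standard proof of Morel's theorem, and all the delicate points are handled: the direction (ii)$\Rightarrow$(i) by the shift on the $\mathbb Z$-indexed middle block (nonidentical precisely because $A_2\neq\varnothing$); and (i)$\Rightarrow$(ii) by cutting $A$ along the orbit of a non-fixed point $a_0$, where the reduction to $f(a_0)>a_0$ via $f\mapsto f^{-1}$ is legitimate because (ii) depends only on $\mathrm{tp}A$, the identification $f^n(A_2)=\{a: f^n(a_0)\leqslant a<f^{n+1}(a_0)\}$ follows from $f^n$ being an order automorphism, and the partition argument via the lower set $S_a\subseteq\mathbb Z$ correctly exhausts the three cases ($S_a=\varnothing$, $S_a=\mathbb Z$, or $S_a=(-\infty,N]$, the last because a nonempty proper lower set of $\mathbb Z$ is bounded above and hence has a maximum). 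The resulting blocks are stacked in the right order, so $\mathrm{tp}A=\mathrm{tp}A_1+\mathrm{tp}A_2\,\mathrm{tp}\mathbb Z+\mathrm{tp}A_3$ in the sense of the paper's Definition of the product of order types. In short, you have supplied a self-contained proof where the authors chose to cite; nothing needs to be fixed.
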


\begin{theorem}\label{c8}
Let $D_1$ and $D_2$ be subset of $\mathbb{R}^+$ such that $0\in
D_1\cap D_2$. Then the following statements are equivalent.
\begin{itemize}
  \item [(i)] There are sets $A_1$, $A_2$, $A_3\subseteq \mathbb R^+$, $A_2\neq
  \varnothing$,
  such that
  \begin{equation}\label{y2}
    \mathrm{tp}(D_1)=\mathrm{tp}(D_2)=\mathrm{tp}(A_1)+\mathrm{tp}(A_2)\mathrm{tp}(\mathbb
    Z)+\mathrm{tp}(A_3).
  \end{equation}
  \item [(ii)] There are $(X,d_X)$, $(Y,d_Y)\in \textbf{UM}$ and
  weak similarities $\Phi_1:X\to Y$, $\Phi_2:X\to Y$ such that
  $D(X)=D_1$ and $D(Y)=D_2$ and $\Phi_1^{-1}\circ \Phi_2$ is not an
  isometry.
  \item [(iii)] There are $(X,d_X)$, $(Y,d_Y)\in \textbf{SM}$ and
  weak similarities $\Phi_1: X\to Y$, $\Phi_2: X\to Y$ such that
  $D(X)=D_1$ and $D(Y)=D_2$ and $\Phi^{-1}_1\circ\Phi_2$ is not an
  isometry.
\end{itemize}
\end{theorem}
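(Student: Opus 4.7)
The plan is to establish the circular chain $(ii) \Rightarrow (iii) \Rightarrow (i) \Rightarrow (ii)$; the first implication is immediate since $\textbf{UM} \subseteq \textbf{SM}$.

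For $(iii) \Rightarrow (i)$, I would apply Corollary~\ref{cor2.1} together with Corollary~\ref{cor2.2*} to the composition $H := \Phi_1^{-1} \circ \Phi_2: X \to X$, seeing that $H$ is a weak similarity whose scaling function $h: D_1 \to D_1$ is a strictly increasing bijection. Since $H$ is not an isometry, Corollary~\ref{cor3.2*} forces $h \neq \mathrm{id}$, i.e. $D_1$ admits a non-identical order-preserving self-bijection. Lemma~\ref{c7*} then yields the decomposition of $\mathrm{tp}(D_1)$, and Proposition~\ref{c3} yields $\mathrm{tp}(D_2) = \mathrm{tp}(D_1)$.

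For $(i) \Rightarrow (ii)$, the central device is the \emph{max ultrametric}: I would set $X := D_1$ and $d_X(s,t) := \max(s,t)$ for $s \neq t$, with $d_X(t,t) := 0$. A direct check shows this is an ultrametric with $D(X) = D_1$ (each $t > 0$ is realized as the distance from $0$ to $t$); build $Y := D_2$ analogously. Since $\mathrm{tp}(D_1) = \mathrm{tp}(D_2)$ by (i), fix an order isomorphism $\psi: D_1 \to D_2$, which automatically sends $0 \mapsto 0$. Applying Lemma~\ref{c7*} to the decomposition in (i) produces a non-identical order-preserving bijection $g: D_1 \to D_1$, and $g$ also fixes $0$ since $0$ is the minimum. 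I would then take $\Phi_1 := \psi$ and $\Phi_2 := \psi \circ g^{-1}$; the point of the max ultrametric is that any strictly increasing bijection $\phi$ of the distance set satisfies $d_X(\phi(s),\phi(t)) = \phi(d_X(s,t))$, so $\phi$ itself is a weak similarity with scaling function $\phi^{-1}$. Thus both $\Phi_i$ are weak similarities $X \to Y$ (or compositions of such, via Corollary~\ref{cor2.1}), while $\Phi_1^{-1} \circ \Phi_2 = g^{-1}: X \to X$ has scaling function $g \neq \mathrm{id}$ and so, by Corollary~\ref{cor3.2*}, is not an isometry.

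The main obstacle is $(i) \Rightarrow (ii)$: one must realize arbitrary $D_1, D_2 \subseteq \mathbb{R}^+$ as actual distance sets of ultrametric spaces and simultaneously upgrade the purely order-theoretic bijection supplied by Lemma~\ref{c7*} into a genuine weak similarity. The max ultrametric dispatches both requirements at once, because on such spaces the semigroup of weak self-similarities coincides with the group of order-preserving self-bijections of the distance set.
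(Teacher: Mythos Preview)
Your proposal is correct and follows essentially the same route as the paper: the max ultrametric on $D_1$ and $D_2$ to realize $(i)\Rightarrow(ii)$, and Morel's lemma (Lemma~\ref{c7*}) together with the non-rigidity of $D_1$ for $(iii)\Rightarrow(i)$. The only cosmetic differences are that the paper produces two distinct increasing bijections $\Phi_1,\Phi_2:D_1\to D_2$ directly (rather than composing one isomorphism $\psi$ with a self-bijection $g$), verifies non-isometry by an explicit distance computation rather than via Corollary~\ref{cor3.2*}, and notes explicitly that the sets $A_i$ from Lemma~\ref{c7*} may be taken inside $D_1\subseteq\mathbb{R}^+$---a small point you should also record.
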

\begin{proof}
\textbf{(i)$\Rightarrow$(ii)}. Suppose that $A_1$, $A_2$ and $A_3$
are subsets of $\mathbb R^+$ for which ~(\ref{y2}) holds and
$A_2\neq \varnothing$. Then by Lemma~\ref{c7*} there exist strictly
increasing bijections $\Phi_i:D_1\to D_2$, $i=1,2$, such that
$\Phi_1\neq \Phi_2$. Write $X:=D_1$ and $Y:=D_2$ and define
semimetrics $d_X$ and $d_Y$ by the rules
  \begin{equation}\label{y3}
  \begin{split}
  d_X(x,y):=
  \begin{cases}
  0 &\text{if} \, \ x=y, \,\, x,y\in X\\
  \max\{x,y\} &\text{if} \, \ x \neq y, \,\, x,y \in X,
  \end{cases}\\
d_Y(x,y):=
  \begin{cases}
  0 &\text{if} \, \ x=y, \,\, x,y\in Y\\
  \max\{x,y\} &\text{if} \, \ x \neq y, \,\, x,y \in Y.
  \end{cases}
  \end{split}
  \end{equation}
Since $\max\{x,y\}\leqslant \max\{x, y,
z\}=\max\{\max\{x,z\},\max\{z,y\}\}$ for all $x,y,z\in\mathbb{R}^+$,
the semimetrics $d_X$ and $d_Y$ are ultrametrics. It is clear that
$D(X)=D_1$ and $D(Y)=D_2$. Write $f_i:=\Phi_i^{-1}$, $i=1,2$. Then
$D_2$ is the domain of $f_i$ and $D_1$ is the range of $f_i$ for
$i=1,2$. Since $\Phi_1$ and $\Phi_2$ are strictly increasing and
bijective, (\ref{y3}) implies
$$
f_i (d_Y
(\Phi_i(x_1),\Phi_i(X_2)))=f_i(\max\{\Phi_i(x_1),\Phi_i(x_2)\})
$$
$$
=\max\{f_i(\Phi_i(x_1)),f_i(\Phi_i(x_2))\}=\max\{x_1,x_2\}=d_X(x_1,x_2)
$$
for $i=1,2$ and all distinct $x_1, x_2\in X$. Moreover we have
$$
f_i (d_Y(\Phi_i(x),\Phi_i(x)))=f_i(0)=0=d_X(x,x)
$$
for $i=1,2$ and every $x\in X$. Consequently $\Phi_1$ and $\Phi_2$
are weak similarities. To prove that $\Phi_1^{-1}\circ\Phi_2$ is not
an isometry, it is sufficient to find $x, y\in X$ for which
  \begin{equation}\label{y4}
   d_X(x,y)\neq d_X(\Phi_1^{-1}(\Phi_2(x)),\Phi_1^{-1}(\Phi_2(y))).
  \end{equation}
Since the functions $\Phi_1$ and $\Phi_2$ are different and
$\Phi_1(0)=\Phi_2(0)=0$, there is $x_0\in D_1$ such that $x_0\neq 0$
and $\Phi_1(x_0)\neq\Phi_2(x_0)$ i.e.,
  \begin{equation}\label{y5}
    x_0\neq \Phi_1^{-1}(\Phi_2(x_0)).
  \end{equation}
Putting $x=x_0$ and $y=0$ and using~(\ref{y5}),~(\ref{y3}) we obtain
$$
d_X(x,y)=\max\{x_0,0\}=x_0\neq \Phi_1^{-1}(\Phi_2(x_0))
$$
$$
=\max\{\Phi_1^{-1}(\Phi_2(x_0)),\Phi_1^{-1}(\Phi_2(0))\}=
d_X(\Phi_1^{-1}(\Phi_2(x)),\Phi_1^{-1}(\Phi_2(y)))
$$
Relation~(\ref{y4}) follows.

\textbf{(ii)$\Rightarrow$(iii)}. This is trivial.

\textbf{(iii)$\Rightarrow$(i)}. Let (iii) hold. Proposition~\ref{c3}
implies the equality
  \begin{equation}\label{y6}
  \mathrm{tp}(D_1)=\mathrm{tp}(D_2).
  \end{equation}
We claim that the ordered sets $D_1$ and $D_2$ are not rigid. Indeed
from~(\ref{y6}) follows that $D_1$ is rigid if and only if $D_2$ is
rigid. By Proposition ~\ref{c5} if $D_1$ is rigid, then
$\Phi_1^{-1}\circ\Phi_2$ is an isometry (contrary to statement
(iii)). Consequently $D_1$ is not rigid. Lemma~\ref{c7*} implies
that there are total-ordered sets $A_1$, $A_2$, $A_3$ such that,
$A_2\neq \varnothing$ and
$$
\mathrm{tp}(D_1)=\mathrm{tp}(A_1)+\mathrm{tp}(A_2)\mathrm{tp}(\mathbb
Z)+\mathrm{tp}(A_3).
$$
This equality and ~(\ref{y6}) imply~(\ref{y2}). It is clear that we
can take $A_i\subseteq D_1$, $i=1,2,3$. Statement (i) follows.
\end{proof}
\begin{corollary}\label{c9}
Let $(X,d_X)\in \textbf{SM}$. If not all weak similarities
$\Phi:X\to X$ are isometries, then there are  $A_1$, $A_2$,
$A_3\subseteq D(X)$, $A_2\neq \varnothing$, such that
$$
\mathrm{tp}(D(X))=\mathrm{tp} A_1+\mathrm{tp} A_2\mathrm{tp} \mathbb
Z+\mathrm{tp} A_3.
$$
\end{corollary}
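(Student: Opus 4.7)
The plan is to derive Corollary~\ref{c9} as a direct specialization of Theorem~\ref{c8} by taking $Y=X$ (so $D_1=D_2=D(X)$) and exhibiting two weak similarities $X\to X$ whose ``difference'' is not an isometry.

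First I would set $\Phi_1:=\mathrm{id}_X$, which is a weak similarity with scaling function $f_1:=\mathrm{id}_{D(X)}$ by the reflexivity argument from the proof of Proposition~\ref{prop2.1}. Then, letting $\Phi$ be the weak similarity $X\to X$ that is assumed not to be an isometry, I set $\Phi_2:=\Phi$. Clearly $\Phi_1^{-1}\circ\Phi_2 = \Phi$, so this composition is not an isometry. Both $\Phi_1$ and $\Phi_2$ go from $X$ to $X$ with $D(X)=D(X)$, so statement (iii) of Theorem~\ref{c8} holds with $D_1=D_2=D(X)$ (and the ambient spaces chosen as $X$ itself, which lies in $\textbf{SM}$).

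By Theorem~\ref{c8}, statement (iii) implies statement (i), hence there exist sets $A_1,A_2,A_3\subseteq\mathbb{R}^+$ with $A_2\neq\varnothing$ such that
\[
\mathrm{tp}(D(X))=\mathrm{tp}(A_1)+\mathrm{tp}(A_2)\mathrm{tp}(\mathbb{Z})+\mathrm{tp}(A_3).
\]
Finally, the concluding remark in the proof of Theorem~\ref{c8} shows that the $A_i$ can be chosen as subsets of $D_1=D(X)$, which gives exactly the statement of the corollary.

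I do not anticipate any real obstacle here: the whole point is that Corollary~\ref{c9} is the $Y=X$ diagonal of Theorem~\ref{c8}, and the trick of pairing an arbitrary weak similarity with the identity is the only idea needed. The only thing to double-check is that the identity map genuinely qualifies as one of the $\Phi_i$ in (iii) of Theorem~\ref{c8}, which is immediate from Definition~\ref{def1.1}.
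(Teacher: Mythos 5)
Your proof is correct and is essentially the argument the paper intends (the paper states the corollary without proof as an immediate consequence of Theorem~\ref{c8}): taking $\Phi_1=\mathrm{id}_X$ and $\Phi_2$ a non-isometric weak similarity yields statement (iii) with $D_1=D_2=D(X)$, and the implication (iii)$\Rightarrow$(i) together with the remark that the $A_i$ may be chosen inside $D_1$ gives the conclusion. Note also that the hypothesis $0\in D_1\cap D_2$ of Theorem~\ref{c8} is automatic here since $X$ is nonvoid.
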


\section{Weak similarities, similarities and geodesics}

  Recall that a function $f:\mathbb{R}^+\to\mathbb{R}^+$ is
  subadditive if the inequality
  \begin{equation}\label{eq4.1}
  f(x+y)\leqslant f(x)+f(y)
  \end{equation}
holds for all $x,y \in \mathbb{R}^+$.

\begin{definition}\label{def4.1}
(\cite{DPK}) Let $A$ be a subset of $\mathbb{R}^+$. A function
$f:A\to\mathbb{R}^+$ is subadditive in the generalized sense if the
implication
  \begin{equation}\label{eq4.2}
(x\leqslant\sum\limits_{i=1}^mx_i)\Rightarrow (f(x)\leqslant
\sum\limits_{i=1}^mf(x_i))
  \end{equation}
holds for all $x, x_1,..,x_m\in A$ and every positive integer number
$m\geqslant 1$.
\end{definition}
\begin{remark}\label{rem4.2}
If $f:\mathbb{R}^+\to\mathbb{R}^+$ is increasing, then
~(\ref{eq4.1}) holds for all $x,y\in \mathbb{R}^+$ if and only if
~(\ref{eq4.2}) is true for all $x, x_1,...,x_m \in \mathbb{R}^+$
with $m\geqslant 2$. Thus Definition~\ref{def4.1} is equivalent to
the usual definition  of subadditivity if $A=\mathbb{R}^+$ and $f$
is increasing.
\end{remark}
\begin{lemma}\label{lem4.3}(~\cite{DPK})
Let $A$ be a nonempty subset of  $\mathbb{R}^+$. The following
conditions  are equivalent for every function $f:A\to\mathbb{R}^+$.
\begin{itemize}
  \item [(i)] The function $f$ is subadditive in the generalized sense.
  \item [(ii)] There is an increasing  and subadditive function $\Psi:\mathbb{R}^+\to\mathbb{R}^+$
such that $f$ is the restriction of $\Psi$ on $A$.
\end{itemize}
\end{lemma}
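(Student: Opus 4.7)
The plan is to prove the trivial direction (ii)$\Rightarrow$(i) by iterating subadditivity and applying monotonicity, and to prove (i)$\Rightarrow$(ii) by an explicit construction of $\Psi$ as an infimum over finite decompositions by elements of $A$.

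For (ii)$\Rightarrow$(i), suppose $\Psi:\mathbb{R}^+\to\mathbb{R}^+$ is increasing and subadditive with $\Psi|_A=f$. A straightforward induction on $m$ yields $\Psi(y_1+\cdots+y_m)\leqslant \Psi(y_1)+\cdots+\Psi(y_m)$ for all $y_1,\ldots,y_m\in\mathbb{R}^+$. Then for $x,x_1,\ldots,x_m\in A$ with $x\leqslant x_1+\cdots+x_m$, monotonicity combined with the iterated subadditivity gives
$$f(x)=\Psi(x)\leqslant \Psi(x_1+\cdots+x_m)\leqslant \Psi(x_1)+\cdots+\Psi(x_m)=f(x_1)+\cdots+f(x_m),$$
which is exactly~(\ref{eq4.2}).

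For the substantive direction (i)$\Rightarrow$(ii), the natural candidate is
$$\Psi(x):=\inf\!\left\{\sum_{i=1}^{m}f(x_i)\;:\;m\in\mathbb{N},\ x_1,\ldots,x_m\in A,\ \sum_{i=1}^{m}x_i\geqslant x\right\}$$
for $x\in\mathbb{R}^+$. I would first check that this infimum is taken over a nonempty set for every such $x$: if $A$ contains some $a>0$ it suffices to repeat $a$ at least $\lceil x/a\rceil$ times, and the degenerate case $A=\{0\}$ is handled separately by setting $\Psi\equiv f(0)$. Nonnegativity of $f$ then gives $\Psi:\mathbb{R}^+\to\mathbb{R}^+$.

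Three verifications remain. First, $\Psi|_A=f$: the singleton decomposition $(x)$ yields $\Psi(x)\leqslant f(x)$, while the reverse inequality $f(x)\leqslant\sum f(x_i)$ for every admissible decomposition is precisely the content of generalized subadditivity~(\ref{eq4.2}). Second, $\Psi$ is increasing because a larger argument imposes a stricter constraint on admissible decompositions and hence shrinks the family over which the infimum is computed. Third, $\Psi$ is subadditive: given $\varepsilon>0$, I concatenate $\varepsilon$-optimal decompositions for $x$ and for $y$ into an admissible decomposition for $x+y$ whose cost is at most $\Psi(x)+\Psi(y)+2\varepsilon$, and then let $\varepsilon\to 0$. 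The main obstacle is identifying the correct formula for $\Psi$; once it is on the table, each verification is short and uses only nonnegativity of $f$ together with the generalized subadditivity.
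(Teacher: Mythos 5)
Your proof is correct. Note, however, that this paper contains no proof of Lemma~\ref{lem4.3} to compare against: the statement is imported verbatim from the reference \cite{DPK} (``Metric products and continuation of isotone functions''), so your argument is a self-contained reconstruction rather than a variant of an in-paper proof. The direction (ii)$\Rightarrow$(i) is handled exactly as one would expect (iterate subadditivity, then use monotonicity), and for (i)$\Rightarrow$(ii) your extension
$\Psi(x)=\inf\bigl\{\sum_{i=1}^{m}f(x_i): m\in\mathbb{N},\ x_i\in A,\ \sum_{i=1}^{m}x_i\geqslant x\bigr\}$
is the natural subadditive-hull construction, and each of your three verifications goes through: the singleton decomposition together with~(\ref{eq4.2}) gives $\Psi|_A=f$; monotonicity follows since enlarging $x$ shrinks the admissible family; and concatenating $\varepsilon$-optimal decompositions gives subadditivity. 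Two small points worth making explicit. First, in the nonemptiness check the count $\lceil x/a\rceil$ should be replaced by $\max\{1,\lceil x/a\rceil\}$ since the definition requires $m\geqslant 1$. Second, your degenerate case $A=\{0\}$ is genuinely needed and is handled correctly: generalized subadditivity does not force $f(0)=0$ (it only gives $f(0)\leqslant f(a)$ for all $a\in A$, via $m=1$), and the constant function $\Psi\equiv f(0)$ is indeed increasing and subadditive because $f(0)\geqslant 0$. With those observations the argument is complete.
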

\begin{theorem}\label{th4.4}
Let $(X,d_X)$, $(Y,d_Y)\in \textbf{SM}$ and let $\Phi:X\to Y$ be a
weak similarity with the scaling function $f:D(Y)\to D(X)$. If
$f^{-1}$ and $f$ are subadditive in the generalized sense and $0\in
\mathrm{ac}(D(X)) \cap \mathrm{ac}(D(Y))$, then $\Phi$ is a
similarity.
\end{theorem}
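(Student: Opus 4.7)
The plan is to appeal to Lemma~\ref{lem3.2}: it suffices to show that the scaling function $f$ is linear, i.e.\ $f(t)=ct$ on $D(Y)$ for some positive constant $c$. The key idea is to bound the ratio $f(t)/t$ from above and below using the generalized subadditivity hypothesis, where the ``small-scale'' ratio $f(t_n)/t_n$ with $t_n\to 0$ in $D(Y)$ serves as a common reference value that will turn out to be independent of $t$.

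To carry this out, I would first record that the assumption $0\in\mathrm{ac}(D(X))\cap\mathrm{ac}(D(Y))$ forces $f:D(Y)\to D(X)$ and $f^{-1}:D(X)\to D(Y)$ to be continuous at the origin; this is precisely the observation made inside the proof of Proposition~\ref{prop2.5} (combined with the symmetry of $\we$ established in Proposition~\ref{prop2.1}). Fix any sequence $t_n\in D(Y)\setminus\{0\}$ with $t_n\to 0$; then $f(t_n)\to 0$ as well. For an arbitrary $t\in D(Y)$ with $t>0$, set $m_n:=\lceil t/t_n\rceil$, so that $t\leqslant m_n t_n<t+t_n$. Applying generalized subadditivity of $f$ with $x=t$ and $x_1=\cdots=x_{m_n}=t_n$ gives $f(t)\leqslant m_n f(t_n)$, and dividing by $t$ yields
$$
\frac{f(t)}{t}\leqslant\left(1+\frac{t_n}{t}\right)\frac{f(t_n)}{t_n}.
$$
Letting $n\to\infty$, the factor in parentheses tends to $1$, and we obtain $f(t)/t\leqslant\liminf_{n\to\infty}f(t_n)/t_n$.

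Running the same argument with $f^{-1}$ in place of $f$, the sequence $s_n:=f(t_n)\to 0$ in $D(X)$ in place of $t_n$, and the point $s:=f(t)\in D(X)$ in place of $t$, one gets $t/f(t)\leqslant\liminf_n t_n/f(t_n)$, which is equivalent to $f(t)/t\geqslant\limsup_n f(t_n)/t_n$. Combining the two bounds, the limit $c:=\lim_n f(t_n)/t_n$ exists and equals $f(t)/t$ for every positive $t\in D(Y)$; since the right-hand side does not depend on $t$, we conclude $f(t)=ct$ on all of $D(Y)$, and Lemma~\ref{lem3.2} completes the proof. The main obstacle is choosing the correct specialization of the generalized subadditivity --- using $m_n$ copies of a single small $t_n$ rather than a fine partition of $t$ itself --- so that the resulting inequality compares $f(t)/t$ with the infinitesimal ratio at $t_n$; everything else is a symmetric squeeze whose only extra ingredient is the continuity of $f$ and $f^{-1}$ at $0$, already obtained in Proposition~\ref{prop2.5}.
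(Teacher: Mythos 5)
Your proof is correct, but it takes a genuinely different route from the paper's. The paper derives the one-sided bound $f(y)\leqslant D_+f(0)\,y$ from Lemma~\ref{lem4.6}, whose proof extends $f$ to an increasing subadditive function on all of $\mathbb R^+$ via Lemma~\ref{lem4.3} and then invokes results on metric-preserving functions from \cite{DM1} (such a function is Lipschitz with constant $D_+f(0)$); the matching lower bound then comes from applying the same lemma to $f^{-1}$ together with Lemma~\ref{lem4.5}, which relates $D^+f^{-1}(0)$ to $1/D_+f(0)$. You instead obtain both bounds directly and elementarily: covering $t$ by $m_n=\lceil t/t_n\rceil$ copies of a single small $t_n$ turns generalized subadditivity into $f(t)/t\leqslant(1+t_n/t)f(t_n)/t_n$, and the symmetric argument for $f^{-1}$ along $s_n=f(t_n)$ gives the reverse inequality, with $\liminf_n f(t_n)/t_n$ along your fixed null sequence playing the role of $D_+f(0)$ and the elementary reciprocity of $\liminf$ and $\limsup$ replacing Lemma~\ref{lem4.5}. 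Your use of the continuity of $f$ at $0$ (needed so that $f(t_n)\to 0$, hence the factor $1+f(t_n)/f(t)\to 1$) is correctly justified by the argument inside Proposition~\ref{prop2.5} together with the hypothesis $0\in\mathrm{ac}(D(X))$. What your approach buys is a self-contained proof that needs none of Lemmas \ref{lem4.3}, \ref{lem4.5}, \ref{lem4.6} nor the external Lipschitz result; what the paper's approach buys is a factorization through reusable lemmas of independent interest. The only points worth making explicit in a final write-up are that the common value $c=f(t)/t$ is finite and strictly positive (immediate from $f$ being a strictly increasing bijection with $f(0)=0$), so that Lemma~\ref{lem3.2} applies with $r=1/c$.
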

Before proving the theorem we recall the definition of the lower
right Dini derivative. Let a real valued function $f$ be defined on
a set $A\subseteq \mathbb R$ and let $x_0\in A$. Suppose that
$x_0\in \mathrm{ac}(A\cap (x_0,\infty))$. The lower right Dini
derivative $D_+$ of $f$ at $x_0$  over set $A$ is defined by
$$
D_+f(x_0):=\liminf\limits_{\substack {x\to x_0\\
x\in A\cap (x_0,\infty)}}\frac{f(x)-f(x_0)}{x-x_0}.
$$
Analogously, the upper right Dini derivative of $f$ at $x_0$ over
set $A$ is defined as
$$
D^+f(x_0):=\limsup\limits_{\substack {x\to x_0\\
x\in A\cap (x_0,\infty)}}\frac{f(x)-f(x_0)}{x-x_0}.
$$
\begin{lemma}\label{lem4.5}
Let $A,B\subseteq \mathbb R$ and $f:A\to B$ be strictly increasing
and surjective and let $x_0\in A$, $y_0:=f(x_0)$. If $x_0\in
\mathrm{ac}(A\cap(x_0,\infty))$  and $y_0\in
\mathrm{ac}(B\cap(y_0,\infty))$, then the equality
\begin{equation}\label{eq4.3}
D^+f^{-1}(y_0)=\frac{1}{D_+f(x_0)}
\end{equation}
holds, where $f^{-1}$ is the inverse function for $f$ and
$D^+f^{-1}(y_0):=\{_0^{\infty}$ if $D_+f(x_0)=\{_{\infty}^0$.
\end{lemma}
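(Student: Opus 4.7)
The plan is to express both Dini derivatives as sequential limits and then change variables via $f$. Since $f:A\to B$ is strictly increasing and surjective, $f^{-1}:B\to A$ exists and is also strictly increasing, and $f$ restricts to a bijection between $A\cap(x_0,\infty)$ and $B\cap(y_0,\infty)$: indeed $f(x)>y_0$ iff $x>x_0$.

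The crux is the following sequential equivalence: for any $x_n\in A\cap(x_0,\infty)$ and $y_n:=f(x_n)\in B\cap(y_0,\infty)$, one has $x_n\to x_0$ if and only if $y_n\to y_0$. I would prove both directions by contradiction. If $y_n\to y_0$ but $x_{n_k}\geqslant x_0+\delta$ along some subsequence, then the hypothesis $x_0\in\mathrm{ac}(A\cap(x_0,\infty))$ furnishes $x'\in A$ with $x_0<x'<x_0+\delta$, so that $f(x_{n_k})\geqslant f(x')>y_0$, contradicting $y_n\to y_0$. The reverse direction is symmetric: $y_0\in\mathrm{ac}(B\cap(y_0,\infty))$ supplies an interpolating $y'\in B$ that, translated back through $f^{-1}$, produces the required contradiction.

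Armed with this equivalence, the substitution $y=f(x)$ transforms
$$
D^+f^{-1}(y_0)=\limsup_{\substack{y\to y_0\\ y\in B\cap(y_0,\infty)}}\frac{f^{-1}(y)-x_0}{y-y_0}=\limsup_{\substack{x\to x_0\\ x\in A\cap(x_0,\infty)}}\frac{x-x_0}{f(x)-y_0},
$$
after which the elementary identity $\limsup(1/h)=1/\liminf h$ for strictly positive $h$, with the natural conventions $1/0=\infty$ and $1/\infty=0$ matching those given in the statement, yields equality~(\ref{eq4.3}).

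The main obstacle is the change-of-variables step, since it is the only place where the two accumulation hypotheses enter essentially. Without either of them, $f$ or $f^{-1}$ could exhibit a one-sided jump, decoupling the sequential limits and breaking the equivalence between the limsups; the remaining arithmetic with $\limsup$ and $\liminf$ is routine.
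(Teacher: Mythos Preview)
Your proof is correct and follows essentially the same route as the paper's: both reduce the identity to a change of variables $y=f(x)$ in the $\limsup$, justified by the one-sided continuity of $f$ and $f^{-1}$ (your sequential equivalence), followed by the elementary identity $\limsup(1/h)=1/\liminf h$. Your treatment of the sequential equivalence is in fact more explicit than the paper's, which simply asserts that the two accumulation hypotheses imply right continuity of $f$ at $x_0$ and of $f^{-1}$ at $y_0$ before performing the same change of variables.
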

\begin{proof}
It follows from the definition of Dini derivatives that
$$
D^+f^{-1}(x_0)=\limsup\limits_{\substack {y\to y_0\\
y\in B\cap (y_0,\infty)}}\frac{f^{-1}(y)-f^{-1}(y_0)}{y-y_0}
$$
$$
=\left(\liminf\limits_{\substack {y\to y_0\\
y\in B\cap
(y_0,\infty)}}\frac{y-y_0}{f^{-1}(y)-f^{-1}(y_0)}\right)^{-1}.
$$
The conditions
$$
x_0\in \mathrm{ac}(A\cap(x_0,\infty)) \, \text{ and } \, y_0\in
\mathrm{ac}(B\cap(y_0,\infty))
$$
imply that $f$ is right continuous at $x_0$ and $f^{-1}$ is right
continuous at $y_0$. Consequently we have
$$
\liminf\limits_{\substack {y\to y_0\\
y\in B\cap (y_0,\infty)}}\frac{y-y_0}{f^{-1}(y)-f^{-1}(y_0)}=\liminf\limits_{\substack {x\to x_0\\
x\in A\cap (x_0,\infty)}}\frac{f(x)-f(x_0)}{x-x_0}.
$$
Equality~(\ref{eq4.3}) follows.
\end{proof}

\begin{lemma}\label{lem4.6}
Let $A\subseteq \mathbb R^+$ and let $f:A\to \mathbb R^+$ be
subadditive in the generalized sense. Suppose that $0\in
A\cap\mathrm{ac}(A)$ and $f(0)=0$, then the inequality
\begin{equation}\label{eq4.4}
f(x)\leqslant D_+f(0)x
\end{equation}
holds for every $x\in A$.
\end{lemma}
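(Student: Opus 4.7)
The case $x=0$ is immediate since $f(0)=0$, so fix $x\in A$ with $x>0$. The plan is to control $f(x)$ by comparing $x$ with a sum of many copies of a small element of $A$, then pass to the liminf that defines $D_+f(0)$.

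Since $0\in\mathrm{ac}(A)$ and $A\subseteq\mathbb{R}^+$, the origin is an accumulation point of $A\cap(0,\infty)$. By the definition of the lower right Dini derivative as a liminf, I can choose a sequence $\{y_n\}\subset A\cap(0,\infty)$ with $y_n\to 0$ such that
\begin{equation*}
\lim_{n\to\infty}\frac{f(y_n)}{y_n}=D_+f(0),
\end{equation*}
where the limit is interpreted as $+\infty$ when $D_+f(0)=+\infty$ (in which case the conclusion of the lemma is trivial, so I may assume $D_+f(0)<\infty$).

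For each $n$, set $m_n:=\lceil x/y_n\rceil\in\mathbb{N}$, so that $x\leqslant m_n y_n$. Writing this as $x\leqslant \underbrace{y_n+\dots+y_n}_{m_n\text{ times}}$ with all summands in $A$, Definition~\ref{def4.1} gives
\begin{equation*}
f(x)\leqslant m_n f(y_n)=(m_n y_n)\cdot\frac{f(y_n)}{y_n}.
\end{equation*}
Since $m_n y_n\in[x,x+y_n)$, we have $m_n y_n\to x$ as $n\to\infty$, while $f(y_n)/y_n\to D_+f(0)$ by construction. Letting $n\to\infty$ in the displayed inequality yields $f(x)\leqslant x\cdot D_+f(0)$, which is~(\ref{eq4.4}).

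I do not expect any serious obstacle; the one point that requires care is the degenerate case $D_+f(0)=0$, where the argument forces $f(x)=0$, and the case $D_+f(0)=+\infty$, where the inequality holds vacuously. Both are handled by the convention for the limit above, so the argument covers all situations.
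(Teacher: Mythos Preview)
Your argument is correct and is genuinely different from the paper's proof. The paper proceeds indirectly: it first invokes Lemma~\ref{lem4.3} to extend $f$ to an increasing subadditive $\Psi:\mathbb{R}^+\to\mathbb{R}^+$, then treats the case $A=\mathbb{R}^+$ by appealing to the theory of metric-preserving functions (citing results from \cite{DM1}) to conclude that $\Psi$ is Lipschitz with constant $D_+\Psi(0)$, and finally uses $D_+\Psi(0)\leqslant D_+f(0)$ to transfer the bound back to $f$. Your proof bypasses all of this machinery by working directly on $A$: choosing a minimizing sequence $y_n\to 0$ for the difference quotient, packing $\lceil x/y_n\rceil$ copies of $y_n$ above $x$, and applying Definition~\ref{def4.1} with repeated summands. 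This is more elementary and fully self-contained; the paper's route, while longer, has the virtue of situating the lemma within the established theory of metric-preserving functions.
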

\begin{proof}
Inequality~(\ref{eq4.4}) is trivial if $f(x)\equiv 0$ or
$D_+f(0)=+\infty$. Hence, without loss of generality, we can assume
\begin{equation}\label{eq4.5}
f(x)\not\equiv 0 \, \text{ and }\, D_+f(0)\neq +\infty.
\end{equation}
Since $f$ is subadditive in the generalized sense, ~(\ref{eq4.5})
implies that
\begin{equation}\label{eq4.6}
0 \leqslant  D_+f(0)<\infty
\end{equation}
and that the equivalence
\begin{equation}\label{eq4.7}
(f(x)=0)\Leftrightarrow (x=0)
\end{equation}
holds for every $x\in A$.

First consider the case when $A=\mathbb R^+$. Then, as has been
noted in Remark~\ref{rem4.2}, $f$ is increasing and subadditive.
Every increasing subadditive function satisfying ~(\ref{eq4.7}) is
metric preserving (see Theorem 4.1 in \cite{DM1}), i.e., $f\circ d$
is a metric for every metric space $(X,d)$. As has been shown in
Lemma 3.10~\cite{DM1},  a metric preserving function $f:\mathbb
R^+\to \mathbb R^+$ is Lipschitz if and only if $D_+ f(0)<\infty$.
Moreover, if this inequality holds, then $D_+f(0)$ is the Lipschitz
constant of $f$. Thus, if $A=\mathbb R^+$, then ~(\ref{eq4.4})
holds.

Suppose now that $A\neq \mathbb  R^+$. By Lemma~\ref{lem4.3} there
is an increasing subadditive function $\Psi:\mathbb R^+\to \mathbb
R^+$ such that
\begin{equation}\label{eq4.8}
\Psi(x)=f(x)
\end{equation}
for every $x\in A$. Since $A\subseteq \mathbb R^+$, the last
equality implies the inequality
\begin{equation}\label{eq4.9}
D_+\Psi(0)\leqslant D_+f(0).
\end{equation}
Furthermore we have also
$$
(\Psi(x)=0)\Leftrightarrow (x=0)
$$
for every $x\in \mathbb R^+$, because $\Psi(0)=f(0)=0$ and $\Psi$ is
increasing and subadditive. Hence, as has been shown above, the
inequality $\Psi(x)\leqslant D_+\Psi(0)x$ holds for every $x\in
\mathbb R^+$. The last inequality, ~(\ref{eq4.8}) and ~(\ref{eq4.9})
imply ~(\ref{eq4.4}) for every $x\in A$.
\end{proof}
\begin{proof}[Proof of theorem~\ref{th4.4}]
Suppose that $f$ and $f^{-1}$ are subadditive in the generalized
sense and
$$
a\in \mathrm{ac}(D(X))\cap \mathrm{ac}(D(y)).
$$
Using Lemma~\ref{lem4.6} we obtain the inequality
\begin{equation}\label{eq4.10}
f(y)\leqslant D_+f(0)y
\end{equation}
for every $y\in D(Y)$ and the inequality
\begin{equation}\label{eq4.11}
f^{-1}(x)\leqslant D_+f^{-1}(0)x
\end{equation}
for every $x\in D(X)$. Since $D_+f^{-1}(0)\leqslant D^+f^{-1}(0)$,
inequality~(\ref{eq4.11}) implies
\begin{equation}\label{eq4.12}
f^{-1}(x)\leqslant D^+f^{-1}(0)x.
\end{equation}
Note also that the double inequality
\begin{equation}\label{eq4.13}
0<D_+f(0)<\infty
\end{equation}
holds. Indeed, if $D_+f(0)=0$, then this equality, the inequality
$f(y)\geqslant 0$ and~(\ref{eq4.10}) imply  $f(y)\equiv 0$, contrary
to bijectivity of $f$. If $D_+f(0)=+\infty$, then Lemma~\ref{lem4.5}
gives $D^+f^{-1}(0)=0$. This equality and~(\ref{eq4.12}) imply
$f^{-1}(x)\equiv 0$, contrary to bijectivity of $f^{-1}$.

By Lemma~\ref{lem4.5} we have $D^+f^{-1}(0)=\frac{1}{D_+f(0)}$.
Substituting this equality in~(\ref{eq4.12}) we obtain
$$
f^{-1}(x)\leqslant \frac{1}{D_+f(0)}x
$$
for $x\in D(X)$ or, in the equivalent form,
\begin{equation}\label{eq4.14}
y\leqslant \frac{1}{D_+f(0)}f(y)
\end{equation}
for $y\in D(Y)$. Inequalities~(\ref{eq4.10}), ~(\ref{eq4.13}) and
~(\ref{eq4.14}) give the equality $f(y)=D_+f(0)y$ for every $y\in
D(Y)$. Now Lemma~\ref{lem3.2} implies that the weak similarity
$\Phi:X\to Y$ is a similarity, as required.
\end{proof}

The geodesic spaces are an important example of spaces for which
every weak similarity is a similarity.

We recall the definition of geodesics. Let $(X,d)$ be a metric
space. A \emph{geodesic path} in $X$ is a path $\gamma:[a,b]\to X$,
$-\infty<a<b<\infty$, such that
$d(\gamma(t_1),\gamma(t_2))=|t_2-t_1|$ for all $t_1, t_2 \in [a,b]$.
If $\gamma(a)=x$ and $\gamma(b)=y$, then we say that $\gamma$ joins
the points $x$ and $y$. A metric space $(X,d)$ is geodesic if for
every two distinct points $x_1, x_2\in X$ there is a geodesic path
in $X$ joining them (see, for example, \cite[p. 51, p 58]{Pa}).
\begin{theorem}\label{th4.7}
Let $(X,d_X)$ and $(Y,d_Y)$ be geodesic metric spaces and let
$\Phi:X\to Y$ be a weak similarity. Then $\Phi$ is a similarity and,
if $X$ and $Y$ are bounded with $0<\diam X \wedge \diam Y$, then the
ratio $r(\Phi)$ equals $\frac{\diam Y}{\diam X}$.
\end{theorem}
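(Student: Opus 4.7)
The plan is to verify the two hypotheses of Theorem~\ref{th4.4} (accumulation of $0$ in both distance sets, and generalized subadditivity of $f$ and $f^{-1}$) and then read off the ratio formula from the definition of a similarity.

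If $X$ is a single point then so is $Y$ by surjectivity of $\Phi$, and the conclusion is trivial. Assume $\diam X>0$. For any two distinct points $x_1,x_2\in X$, a geodesic $\gamma:[0,d_X(x_1,x_2)]\to X$ joining them satisfies $d_X(\gamma(s),\gamma(t))=|t-s|$, so $[0,d_X(x_1,x_2)]\subseteq D(X)$; hence $D(X)\supseteq[0,\diam X)$, and in particular $0\in\mathrm{ac}(D(X))$. The same conclusion for $Y$ follows symmetrically, so the accumulation hypothesis of Theorem~\ref{th4.4} holds. By Corollary~\ref{cor2.2*}, both $f$ and $\Phi$ are bijective, so $(f^{-1},\Phi^{-1})$ is a realization of $Y\we X$.

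To verify the generalized subadditivity of $f^{-1}$, I would first establish the additive step: given $s_1,\dots,s_m\in D(X)$ whose sum $S=s_1+\cdots+s_m$ also lies in $D(X)$, choose $x,y\in X$ with $d_X(x,y)=S$ and sample a geodesic joining $x$ and $y$ at the cumulative distances $s_1,\,s_1+s_2,\dots$ to obtain points $u_0,\dots,u_m\in X$ with $d_X(u_{i-1},u_i)=s_i$. The scaling equation gives $d_Y(\Phi(u_{i-1}),\Phi(u_i))=f^{-1}(s_i)$ and $d_Y(\Phi(u_0),\Phi(u_m))=f^{-1}(S)$, so the triangle inequality in $Y$ yields $f^{-1}(S)\leqslant\sum_{i=1}^m f^{-1}(s_i)$. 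For the full implication~(\ref{eq4.2}) with $s\leqslant\sum s_i$ and $s\in D(X)$: if $\sum s_i\in D(X)$, combine the previous step with monotonicity of $f^{-1}$; otherwise truncate to the minimal $m'$ with $s_1+\cdots+s_{m'}\geqslant s$ and replace $s_{m'}$ by $s'_{m'}:=s-(s_1+\cdots+s_{m'-1})\in[0,s_{m'}]$, which lies in $D(X)$ because $D(X)\supseteq[0,s]$. The argument for $f$ is completely symmetric: use the realization $(f^{-1},\Phi^{-1})$ of $Y\we X$, sample a geodesic in $Y$, and invoke the triangle inequality in $X$.

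Theorem~\ref{th4.4} then gives that $\Phi$ is a similarity with some ratio $r>0$. If $X$ and $Y$ are bounded with $\diam X\wedge\diam Y>0$, the identity $d_Y(\Phi(x),\Phi(y))=r\,d_X(x,y)$ combined with surjectivity of $\Phi$ yields $\diam Y=r\,\diam X$, so $r=\diam Y/\diam X$. The technical core of the argument is the subadditivity verification; the only genuine subtlety is the truncation step, which works precisely because the geodesic hypothesis forces $D(X)$ (and $D(Y)$) to contain the interval $[0,\diam X)$.
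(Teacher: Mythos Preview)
Your proof is correct and follows essentially the same route as the paper: reduce to Theorem~\ref{th4.4} and verify generalized subadditivity of $f$ and $f^{-1}$ by sampling a geodesic in one space and applying the triangle inequality in the other. The only technical difference is that where you handle $s\leqslant\sum s_i$ by truncating and adjusting the last term, the paper rescales by the factor $\alpha=t/\sum t_i\leqslant 1$ so that the cumulative sums $\alpha t_1,\alpha(t_1+t_2),\dots$ land exactly at $t$, obtaining $f(t)\leqslant\sum f(\alpha t_i)\leqslant\sum f(t_i)$ in one stroke and avoiding the case split.
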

\begin{proof}
Let $f:D(Y)\to D(X)$ be the scaling function corresponding $\Phi$.
By Theorem~\ref{th4.4} to prove that $\Phi$ is a similarity it is
sufficient to show that $f$ and $f^{-1}$ are subadditive in the
generalized sense. By definition $f$ is subadditive in the
generalized sense if the inequality
\begin{equation}\label{eq4.15}
f(t)\leqslant \sum\limits_{i=1}^mf(t_i)
\end{equation}
holds for $t, t_1, t_2,...,t_m\in D(Y)$ whenever
\begin{equation}\label{eq4.16}
t\leqslant \sum\limits_{i=1}^mt_i,\quad m\in \mathbb N.
\end{equation}
Let $t, t_1, t_2,...,t_m\in D(Y)$, $t>0$, and let~(\ref{eq4.16})
hold. Write
$$
\alpha:=\frac{t}{\sum\limits_{i=1}^{m}t_i}.
$$
It is clear that $\alpha \leqslant 1$ and
$t=\sum\limits_{i=1}^{m}\alpha t_i$. Let $a, b\in Y$ with
$d_Y(a,b)=t$ and let $\gamma:[0,t]\to Y$ be a geodesic path joining
$a$ and $b$. Let us define points $y_0, y_1,....,y_m\in Y$ and $x_0,
x_1,...,x_m\in X$ as
$$
y_0:=\gamma(0)=a,\, y_1:=\gamma(\alpha t_1),\, y_2:=\gamma(\alpha
t_1+\alpha t_2),...,$$
$$
y_m:=\gamma (\sum\limits_{i=1}^m\alpha
t_i)=\gamma(t)=b
$$
and write $x_i:=\Phi^{-1}(y_i)$, $i=0,...,m$. The triangle
inequality implies
$$
d_X(x_0,x_m)\leqslant \sum\limits_{i=0}^{m-1}d_X(x_i,x_{i+1}).
$$
Since  $\Phi$  is a weak similarity with the scaling function $f$,
the last inequality can be written as
$$
(f\circ d_Y)(y_0,y_m)\leqslant \sum\limits_{i=0}^{m-1}(f\circ
d_Y)(y_i,y_{i+1})
$$
or as
$$
(f\circ d_Y)(a,b)\leqslant (f\circ d_Y)(\gamma(0),\gamma(\alpha
t_1))$$
$$+(f\circ d_Y)(\gamma(\alpha t_1), \gamma (\alpha t_1+\alpha
t_2))+...+(f\circ d_Y)(\gamma(\sum\limits_{i=1}^{m-1}\alpha t_i),
\gamma(\sum\limits_{i=1}^{m}\alpha t_i)).
$$
Since $\gamma$ is a geodesic joining  $a$ and $b$ we have from the
previous inequality that
$$
f(t)\leqslant f(\alpha t_1)+f(\alpha t_2)+...+f(\alpha t_m).
$$
The last inequality, the increase of $f$ and inequality $\alpha
\leqslant 1$ implies ~(\ref{eq4.15}). Consequently $f$ is
subadditive in the generalized sense. The generalized subadditivity
of $f^{-1}$ can be proved similarly. It still remains to note that
the equality $r(\Phi) \diam X =\diam Y$ holds for every similarity
$\Phi:X\to Y$.
\end{proof}
\begin{corollary}\label{cor4.8}
Let $(X,d_X)$ and $(Y,d_Y)$ be bounded geodesic spaces. If $\diam
X=\diam Y$ and $X\we Y$, then $X$ and $Y$ are isometric.
\end{corollary}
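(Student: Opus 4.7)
The plan is to invoke Theorem~\ref{th4.7} directly and then split into two trivial cases according to whether the common diameter is zero or positive. Since $X \we Y$, there exists a weak similarity $\Phi : X \to Y$, and because $X$ and $Y$ are geodesic metric spaces, Theorem~\ref{th4.7} tells us that $\Phi$ is in fact a similarity. The whole corollary then reduces to identifying the ratio $r(\Phi)$.

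First I would dispatch the degenerate case: if $\diam X = \diam Y = 0$, then in a metric space every pair of points has distance $0$, so $X$ and $Y$ are both singletons and any bijection between them is an isometry. (By Corollary~\ref{cor2.2*}, a weak similarity is automatically bijective, so such a $\Phi$ exists.) In the main case $\diam X = \diam Y > 0$, the hypothesis $0 < \diam X \wedge \diam Y$ of Theorem~\ref{th4.7} is satisfied, and therefore
\[
r(\Phi) \;=\; \frac{\diam Y}{\diam X} \;=\; 1.
\]
A similarity with ratio $1$ is by definition an isometry, so $\Phi : X \to Y$ witnesses that $X$ and $Y$ are isometric.

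There is no real obstacle here: all the work has been done in Theorem~\ref{th4.7}, and the corollary is essentially a bookkeeping statement extracting the isometric case $r=1$ from the formula for the ratio. The only minor point to keep in mind is to explicitly address the possibility that the common diameter is zero, since Theorem~\ref{th4.7}'s ratio formula is stated under the assumption $0 < \diam X \wedge \diam Y$; but this boundary case collapses immediately because zero-diameter metric spaces are singletons.
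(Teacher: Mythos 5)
Your proof is correct and follows exactly the route the paper intends: the corollary is stated as an immediate consequence of Theorem~\ref{th4.7}, obtained by noting that the ratio $r(\Phi)=\diam Y/\diam X$ equals $1$. Your explicit handling of the zero-diameter boundary case (where both spaces are singletons) is a small, harmless addition that the paper leaves implicit.
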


To construct an example of compact weak equivalent metric spaces
which are not isometric but have the same diameter we shall use the
\emph{snow-flake transformation} $d \mapsto d^p$, $p\in (0,1)$. It
is well known $d^p$ is a metric for every metric $d$ and $p\in
(0,1]$ (see, for example,~\cite[p. 97]{BS}).
\begin{example}\label{ex3.11}
Let $X=[0,1]$, $d_X(x,y)=\sqrt{|x-y|}$ and  $Y=[0,1]$,
$d_Y(x,y)=|x-y|$. It is clear that $D(Y)=D(X)=[0,1]$. The spaces
$(X,d_X)$ and $(Y,d_Y)$ are weak equivalent with the  realization
$(f,\Phi)$ where $f(x)=\sqrt{x}$ and $\Phi(x)=x$ for every $x\in
[0,1]$. It is easy to see that $(X,d_X)$ and $(Y,d_Y)$ are compact
and $\diam X= \diam Y=1$.

The space $(Y,d_Y)$ is geodesic. Since there are no rectifiable
paths joining $0$ and $1$ in $X$, the space $(X,d_X)$ is not
geodesic. Hence $(X,d_X)$ and $(Y,d_Y)$ are not isometric.
\end{example}

{\bf Oleksiy Dovgoshey}

Institute of Applied Mathematics and Mechanics of NASU, R. Luxemburg str. 74, Donetsk 83114, Ukraine

{\bf E-mail: } aleksdov@mail.ru
\bigskip

{\bf Evgeniy Petrov}

Institute of Applied Mathematics and Mechanics of NASU, R. Luxemburg str. 74, Donetsk 83114, Ukraine

{\bf E-mail: } eugeniy.petrov@gmail.com

\end{document}